\documentclass[11pt,reqno]{amsart}

\usepackage[T1]{fontenc}
\usepackage{lmodern}
\usepackage{microtype}
\usepackage{amsmath,amssymb,amsthm,amsfonts,mathtools,mathrsfs}
\usepackage[breaklinks=true,pdfborder={0 0 0}]{hyperref}
\usepackage{url}
\usepackage[msc-links, lite, alphabetic]{amsrefs}
\usepackage{geometry}
\theoremstyle{plain}
\newtheorem{theorem}{Theorem}[section]
\newtheorem{proposition}[theorem]{Proposition}
\newtheorem{lemma}[theorem]{Lemma}

\newtheorem{conjecture}[theorem]{Conjecture}

\theoremstyle{definition}

\theoremstyle{remark}
\newtheorem{remark}[theorem]{Remark}

\numberwithin{equation}{section}

\DeclareMathOperator{\Ric}{Ric}
\DeclareMathOperator{\Scal}{Scal}
\DeclareMathOperator{\vol}{vol}

\DeclareMathOperator{\diam}{diam}
\DeclareMathOperator{\tr}{tr}

\newcommand{\R}{\mathbb R}

\newcommand{\haus}{\mathcal H}

\newcommand{\eps}{\varepsilon}
\newcommand{\dist}{d}
\newcommand{\defeq}{\mathrel{\mathop:}=}
\newcommand{\dd}{\,d}

\newcommand{\ip}[2]{\langle #1,#2\rangle}

\def\Xint#1{\mathchoice
{\XXint\displaystyle\textstyle{#1}}%
{\XXint\textstyle\scriptstyle{#1}}%
{\XXint\scriptstyle\scriptscriptstyle{#1}}%
{\XXint\scriptscriptstyle\scriptscriptstyle{#1}}%
\!\int}
\def\XXint#1#2#3{{\setbox0=\hbox{$#1{#2#3}{\int}$ }
\vcenter{\hbox{$#2#3$ }}\kern-.6\wd0}}

\def\dashint{\Xint-}

\title[Codimensional volume growth]
{Positive Scalar Curvature and Volume Growth}
\author{Bochao Kong}
\address[Bochao Kong]{Michigan State University, East Lansing, MI, USA}
\email{kongboc1@msu.edu}
\author{Xingyu Zhu}
\address[Xingyu Zhu]{Michigan State University, East Lansing, MI, USA}
\email{zhuxing3@msu.edu}
\date{}

\hypersetup{
  pdftitle={Positive Scalar Curvature and Volume Growth},
  pdfauthor={Bochao Kong and Xingyu Zhu}
}

\subjclass[2020]{53C21, 53C23, 58J35}
\keywords{nonnegative Ricci curvature, positive scalar curvature, volume growth, heat kernel, Wasserstein distance, Liouville measure}

\begin{document}

\begin{abstract}
For a complete Riemannian manifold with nonnegative Ricci curvature, we prove two sharp volume growth order estimates, thereby resolving a conjecture of Gromov from 1986. The first states that a uniform deficit in the volume of unit balls, an analog of positive macroscopic scalar curvature, forces codimension one volume growth, while the second states that a uniformly positive scalar curvature lower bound forces codimension two growth, as predicted by the codimension two volume growth conjecture. 
\end{abstract}

\maketitle
\tableofcontents

\section{Introduction}\label{sec:intro}
\subsection{Background and Statements}
In this paper we study large-scale effects of positive scalar curvature. Let $n\ge 2$, and let $(M^n,g)$ be a smooth Riemannian $n$-manifold. A well-known geometric consequence of scalar curvature is that it constrains the volume of small balls. At a point $p\in M$, the following expansion holds.
\begin{equation}\label{eq:small-ball-expansion}
  \vol_g B(p,r)
  =\omega_n r^n\left(
    1-\frac{\Scal_g(p)}{6(n+2)}r^2+O(r^4)
  \right),
  \qquad r\to0^+,
\end{equation}
where $\omega_n$ is the volume of the Euclidean unit $n$-ball. This description is local, and by itself does not give global control. A central and long-standing problem in scalar curvature is to understand the large-scale geometric and topological effects of scalar curvature.

For noncompact manifolds, Gromov proposed a program based on large Riemannian manifolds \cite{Gro85}. Set
\[
  V(M,r)\defeq\sup_{p\in M}\vol_g B(p,r).
\]
The manifold $(M,g)$ is called \emph{large} if
$V(M,r)\geq\omega_n r^n$ for every $r\geq1$. Failure of largeness is a
macroscopic analogue of positive scalar curvature. Later a more precise definition of macroscopic scalar curvature was given by Guth \cite{Guth2010Metaphors}. Gromov then proposed two sets of conjectures under the same folklore philosophy that manifolds having uniformly positive macroscopic scalar curvature have large-scale codimension $1$ and manifolds having uniformly positive scalar curvature have large-scale codimension $2$. To introduce the first set of conjectures we recall the definition of Urysohn width and macroscopic dimension.

For a metric space $(X,d)$ and an integer $k\geq0$, its Urysohn $k$-width is
\begin{equation}\label{eq:urysohn-width}
  \operatorname{UW}_k(X)
  =\inf_f\ \sup_{y\in P^k}\diam f^{-1}(y),
\end{equation}
where the infimum is taken over continuous maps $f:X\to P^k$ to
$k$-dimensional simplicial complexes \cite{Gromov1988Widths}. The macroscopic dimension of $X$ is the least $k$ for which
$\operatorname{UW}_k(X)<\infty$. The first set of codimension conjectures is stated as follows \cite{Gro85}.

\begin{conjecture}[Macroscopic dimension]\label{conj:macro}
Let $(M^n,g)$ be complete and noncompact.
\begin{enumerate}
\item \label{item:codim1} If, for some $r\geq1$, it holds
  $V(M,r)< \omega_n r^n$, then
  $\operatorname{UW}_{n-1}(M)\le r$.
\item \label{item:codim2} \cite{Gro85}*{2.A.(c)} There is a dimensional constant $C(n)$ such that
  $\Scal_g\geq\sigma^2>0$ implies
  $\operatorname{UW}_{n-2}(M)\leq C(n)/\sigma$.
\end{enumerate}
\end{conjecture}

Guth \cite{Guth2017UrysonWidth} and Papasoglu \cite{Papasoglu2020UrysonWidth} made substantial progress on \eqref{item:codim1} of Conjecture \ref{conj:macro}. They showed, by different methods, that there exists a universal constant $\delta(n)>0$, such that if $V(M,r)\le \delta(n) r^n$ then $\operatorname{UW}_{n-1}(M)\le r$, while the constant $\delta(n)$ is potentially much smaller than the predicted optimal constant $\omega_n$. In contrast, Kumar--Sen constructed, in dimensions at least four, closed manifolds with
uniformly positive macroscopic scalar curvature and unbounded $\operatorname{UW}_{n-2}$ \cite{KumarSen2026UrysohnWidth}, showing that the conjectured codimension $1$ phenomenon for positive macroscopic scalar curvature is sharp.  For \eqref{item:codim1} of Conjecture \ref{conj:macro}, some equivariant (i.e. universal cover of a closed manifold) macroscopic-dimension results are known under additional topological and coarse-geometric hypotheses
\cites{BolotovDranishnikov2009ScalarCurvature,Dranishnikov2013VirtualDuality}.

The second set of conjectures requires nonnegative Ricci curvature. Under nonnegative Ricci curvature, Bishop--Gromov comparison gives
\begin{equation}\label{eq:euclidean-volume-upper-bound}
  \vol_g B(p,r)\leq\omega_n r^n
  \qquad(p\in M,\ r>0).
\end{equation}
In particular $(M,g)$ has polynomial volume growth order bounded above by its (topological or Hausdorff) dimension. It is then natural to measure large-scale dimension by the polynomial order of volume growth instead of macroscopic dimension.
\begin{conjecture}[Volume Growth \cite{Gro85}*{2.A. (b)}]\label{conj:vol}
Let $(M^n,g)$ be complete and noncompact with $\Ric\ge 0$.
\begin{enumerate}
\item \label{item:codim1vol} If for some $\epsilon>0$, it holds $V(M,1)\leq\omega_n-\epsilon$, then there is a finite constant $C=C(n,\epsilon)$ such that, for every
$o\in M$ and every $R>0$, $\vol_g B(o,R)\leq C R^{n-1}$.
\item \label{item:codim2vol} There is a dimensional constant $C(n)$ such that
 if $\Scal_g\geq\sigma>0$, then, for every $o\in M$ and every $R>0$,
 $\vol_g B(o,R)\leq C(n)\sigma^{-1}R^{n-2}$.
\end{enumerate}
\end{conjecture}

We confirm the volume growth conjecture.

\begin{theorem}\label{thm:codim-one}
\eqref{item:codim1vol} of Conjecture \ref{conj:vol} is true.
\end{theorem}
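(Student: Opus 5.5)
We argue by contradiction using compactness and the Cheeger--Colding theory of Ricci limit spaces. The model case is a product $\R^{n-1}\times N$ with $N$ compact: its growth is exactly $R^{n-1}$. The statement says nothing faster is possible. Faster growth would force large-scale Euclidean geometry near some point, and volume convergence then forces unit balls to be almost Euclidean, contradicting $V(M,1)\le\omega_n-\epsilon$.

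\textbf{Step 1 (reduction to a scale-invariant statement).} Set $V(r)=\vol_g B(o,r)$. It suffices to show that $\liminf_{R\to\infty} V(R)/R^{n-1}$ is bounded by a constant $C(n,\epsilon)$, uniformly over all such manifolds and basepoints. For small $R$ the bound is trivial from \eqref{eq:euclidean-volume-upper-bound}. If instead the sharp bound fails, pick $M_i$, $o_i$, $R_i$ with $V_i(R_i)\ge i R_i^{n-1}$. Then $R_i\to\infty$, since $V_i(R_i)\le\omega_nR_i^n$ forces $R_i\ge i/\omega_n$.

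\textbf{Step 2 (rescaling and a non-collapsed limit).} Rescale by $R_i^{-1}$, so that the metric $g_i=R_i^{-2}g$ has $\vol_{g_i} B(o_i,1)\ge i R_i^{-1}$. This alone is not non-collapsed, so we need a better choice of scale. By Bishop--Gromov, $V(r)/r^n$ is nonincreasing. Let $\theta=\lim_{r\to\infty} V(r)/r^n$ be the asymptotic volume ratio.

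If $\theta>0$, then every tangent cone at infinity is a metric cone of Hausdorff dimension $n$. Colding's volume convergence then gives points $p$ far away on the ray, with large balls $B(p,s)$ Gromov--Hausdorff close to a Euclidean ball. Almost-rigidity would give $\vol B(p,1)\to\omega_n$ along such points; more precisely, we use the pointwise almost-Euclidean splitting coming from the cone structure away from the vertex. This contradicts the deficit. In fact Euclidean volume convergence only requires the limit near $p$ to be $\R^n$, which holds at regular points of the cross-section. Those are dense, so we get the contradiction. Hence $\theta=0$.

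\textbf{Step 3 (the collapsed case and the main obstacle).} Now $\theta=0$, and we must upgrade this to order $n-1$. I would take the blow-down limits $(M,R^{-2}g,\vol/V(R),o)\to(X,d,\mu,x)$, which are renormalized limit spaces. By a dimension count of Colding--Naber type, the ``rectifiable dimension'' $k$ of the limit measure satisfies $k\le n-1$. The decisive step is to show that $V(R)\le CR^{n-1}$ follows from the local deficit; it does not merely follow from $k\le n-1$.

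Here is the idea. Growth strictly faster than $R^{n-1}$ at infinitely many scales would, after normalization, produce limits whose collapsed fibers have diameters tending to zero relative to scale but unbounded at the original scale. Note that the unit balls see those fibers at scale $\gg1$. A local $\epsilon$-regularity argument then applies: if $B(p,s)$ is GH-close to $\R^{n-1}\times$(something) with fibers of large size, the unit ball $B(p,1)$ looks like $\R^n$ after an $(n-1)$-splitting plus an extra almost-line direction in the fiber. This gives an $n$-splitting at unit scale and hence, by volume convergence in the non-collapsed regime, $\vol B(p,1)\ge\omega_n-\epsilon$.

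So the estimate I would aim for is the following: for every $\epsilon$ there exists $L(n,\epsilon)$ such that each $p$ has a nearby ``fiber'' of diameter $\le L$. Then a covering argument, with roughly $R^{n-1}$ fibers each of volume $\le\omega_nL^n$, gives $V(R)\le C(n,\epsilon)R^{n-1}$.

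The main obstacle is establishing this uniform fiber-diameter bound via quantitative splitting, since collapsing may be non-uniform across scales. I would attempt it by a contradiction-compactness argument on the unit-scale (unrescaled) sequence. Because $V(1)$ is bounded below only through the deficit, I would combine non-collapsing at unit scale with the lower bound $\vol B(p,1)\ge c(n)\cdot\mathrm{inj}$-free estimates, which I expect to be the technically delicate point.
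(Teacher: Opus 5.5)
There is a genuine gap. The collapsed regime, which is the heart of the theorem, is never handled.

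Your Step~2 is essentially sound and also works in sequence form. If the rescaled spaces $(M_i,R_i^{-2}g_i,o_i)$ stay noncollapsed, then a regular point of the limit, Colding's volume convergence and monotonicity of $r^{-n}\vol B(p,r)$ produce unit balls of volume close to $\omega_n$. But this only gives uniform sub-Euclidean growth $V(R)=o(R^n)$.

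Once $R_i^{-n}V_i(R_i)\to0$ you offer no argument. The ``uniform fiber-diameter bound'' is undefined and unproved, as you acknowledge. If ``fiber'' means the collapsed directions over a blow-down limit, the bound is even false. For $n\ge3$, the flat manifold $\R^{n-2}\times S^1_L\times S^1_\eta$ with $\eta$ small satisfies $V(M,1)\le\omega_n-\epsilon$ uniformly in $L$, since it is a quotient of $\R^{n-1}\times S^1_\eta$. Yet its blow-down is $\R^{n-2}$ with fibers of diameter $\sim L$. So the bounded pieces in your covering cannot be fibers of a blow-down.

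The proposed mechanism is also unjustified. $\tau s$-Gromov--Hausdorff closeness of $B(p,s)$ to a ball in $\R^{n-1}\times Z$ says nothing about unit balls once $\tau s\gg1$. Your step would require two things you do not supply:
\begin{enumerate}
\item propagating almost-splitting through every scale between $s$ and $1$;
\item converting ``no $n$-splitting at scale $1$'' into a count of about $R^{n-1}$ bounded pieces.
\end{enumerate}
Together these amount to a sharp multi-scale packing estimate for the top effective stratum. That is the analogue of the Li--Naber theorem the Appendix uses for Alexandrov spaces, and in the collapsed Ricci setting it is essentially the theorem itself.

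The Colding--Naber bound on the dimension of blow-downs does not provide this. The paraboloid $\{x_3=x_1^2+x_2^2\}$ has a ray as blow-down but area growth of order $R^{3/2}$.

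Two smaller points:
\begin{enumerate}
\item The hypothesis is an upper bound on $\vol B(y,1)$, so no non-collapsing at unit scale is available.
\item Reducing to $\liminf_{R\to\infty}V(R)/R^{n-1}$ is too weak, because $V(R)/R^{n-1}$ is not monotone. Your contradiction sequence is nonetheless the right negation.
\end{enumerate}

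The missing idea is a scale-free quantitative mechanism that turns unit-scale non-splitting into volume loss at every larger scale at once. The paper uses compactness only at the single scale $\sqrt t\sim1$. There, nearly maximal $\tr G_t$ produces $n$ almost-splitting harmonic functions (Proposition~\ref{prop:rank-k-heat-frame}), hence an almost Euclidean unit ball. So the deficit forces $\tr D_{1/2}\ge\delta(n,\epsilon)$, and $D_{1/2}$ has an eigenvalue at least $\delta/n$ everywhere (Proposition~\ref{c1:prop:trace-gap}).

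The Wasserstein distortion bound $\int_{-r}^rD_t(\dot\gamma,\dot\gamma)\,ds\le 8nt/r$ holds along every minimizing geodesic (Proposition~\ref{prop:centered-segment}). Averaged over Liouville measure with Lemma~\ref{lem:angular-k}, it bounds the measure of directions based in $\Omega$ that stay minimizing over length $L$ by $C\vol(\Omega_{L/3})/L$. The midpoint polar-coordinate pairing then gives $V(R)^2\le C(n,\epsilon)R^{n-1}V(R)$. This route needs neither compactness at large scales nor noncollapsing, and those are exactly what your blow-down strategy cannot avoid.
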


\begin{theorem}
\label{thm:codim-two}
\eqref{item:codim2vol} of Conjecture \ref{conj:vol} is true.
\end{theorem}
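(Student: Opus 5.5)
The plan is to prove the bound one unit direction at a time, using the geodesic flow, and then to average over the unit tangent bundle with the Liouville measure so that pointwise Ricci information turns into scalar curvature information. For $p\in M$ and a unit vector $v\in S_pM$, let $J_v(t)$ be the volume density in exponential coordinates along $\gamma_v$, set to zero beyond the cut time. Then
\[
\vol_g B(p,R)=\int_{S_pM}\int_0^R J_v(t)\dd t\dd v .
\]
Write $m_v=J_v'/J_v$. The Riccati inequality $m_v'+\frac{m_v^2}{n-1}\le-\Ric(\dot\gamma_v,\dot\gamma_v)$ gives two facts for every $t\le R$:
\begin{itemize}
\item $J_v(t)\le t^{n-1}$, which is Bishop--Gromov (Ricci is only assumed nonnegative);
\item a quantitative Myers-type statement. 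There is a dimensional $c_n$ such that if $\int_{\tau}^{\tau+s}\Ric(\dot\gamma_v,\dot\gamma_v)\ge c_n/s$ on some window with $\tau+s\le t$, then $\gamma_v$ has a conjugate point before $t$, so $J_v(t)=0$.
\end{itemize}
Consequently $\vol_g B(p,R)\le R^n/n$ times the measure of the set $\mathcal G_p(R)$ of \emph{good} directions. These are the directions along which the Ricci curvature, integrated over every window of length $s$, stays below $c_n/s$. The whole problem reduces to showing that $\mathcal G_p(R)$ has measure at most $C(n)/(\sigma R^2)$ for $R\ge\sigma^{-1/2}$; for $R\le \sigma^{-1/2}$ the claimed bound already follows from Bishop--Gromov.

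The averaging step comes next. Since $\frac1{|S^{n-1}|}\int_{S_xM}\Ric(w,w)\dd w=\Scal_g(x)/n\ge\sigma/n$ and the Liouville measure on $SM$ is invariant under the geodesic flow $\phi_s$, for any region $U\subset M$ we get
\[
\int_{SU}\int_0^t\Ric(\dot\gamma_v(s),\dot\gamma_v(s))\dd s\dd\mu_L(v)\ \ge\ \frac{\sigma t}{n}\,\mu_L\bigl(SU\cap\textstyle\bigcap_{s\le t}\phi_s^{-1}SU\bigr).
\]
So on average the integrated Ricci curvature along geodesics of length $t$ grows like $\sigma t$, far above the threshold $c_n/t$ once $t\gg\sigma^{-1/2}$. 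The model case $S^2\times\R^{n-2}$ shows what the final count should be. There, the good directions are those within angle about $(\sqrt\sigma R)^{-1}$ of the flat factor, a set of measure about $(\sigma R^2)^{-1}$. This is exactly the codimension-two exponent, and it suggests splitting each direction into its component along the flat factor and the transverse component.

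The main obstacle is converting this average into a \emph{lower-tail} estimate on the measure of good directions. Markov's inequality only controls upper tails, and without an upper bound on $\Ric$ one cannot rule out that the Ricci curvature is concentrated on a small set of directions while most directions see almost none. My first attempt will use nonnegativity more strongly. If $v$ is good, then so is every direction in a neighbourhood of $v$ of angular size about $(\sqrt\sigma R)^{-1}$: Jacobi field comparison with $\Ric\ge0$ keeps nearby geodesics close on the scale $\sigma^{-1/2}$ where $\Scal$ is felt. A large good set would then produce, via Liouville invariance, a large set of points and $2$-planes along which $\Scal$ is effectively carried by fewer than two transverse directions. This contradicts $\Scal\ge\sigma$, because $\Scal$ is a sum of sectional-type contributions over pairs of orthogonal directions, and $\Ric\ge0$ prevents cancellation.

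If this direct counting fails, the fallback is to argue by contradiction with a smoothed version of the same quantity. Replace indicator functions by heat kernel measures $H(t,p,\cdot)\dd\vol$, using Li--Yau bounds under $\Ric\ge0$. Use the short-time diagonal expansion $(4\pi t)^{n/2}H(t,x,x)=1+\frac{\Scal(x)}6 t+\dots$, made uniform through Wasserstein contraction estimates for heat flow. Then compare the growth of $\vol_g B(o,R)/R^{n-2}$ across dyadic scales by rescaling and Cheeger--Colding limits: if the growth is larger, a blow-down limit has a factor of dimension at least $n-1$, which is incompatible with the uniform scalar deficit at scale $\sigma^{-1/2}$.
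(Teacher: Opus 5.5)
\textbf{Gap 1: the reduction to a pointwise bound is false.} A bound $|\mathcal G_p(R)|\le C(n)/(\sigma R^2)$ cannot hold at every $p$. Take $\R^3$ with a rotationally symmetric metric $dr^2+f(r)^2g_{S^2}$, where $f$ is concave with a round cap at $r=0$ and $f\to r_0$ (a capped cylinder), and multiply by $\R^{n-3}$. This has $\sec\ge0$ and $\Scal\ge\sigma>0$. Yet from $p=(\mathrm{tip},0)$ every geodesic is a ray. So every direction passes any window test that minimizing geodesics must pass, and $\mathcal G_p(R)=S_pM$ for all $R$. The volume there is small only because $J_v(t)\ll t^{n-1}$, which is exactly what the bound $J_v\le t^{n-1}$ discards.

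The same example shows your Myers-type criterion is false as stated. The fixed amount of Ricci curvature in the cap gives $\int_0^s\Ric(\dot\gamma,\dot\gamma)\ge c_n/s$ for large $s$ along rays. Second variation only controls windows at distance $\gtrsim s$ from both endpoints.

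So only a bound averaged over base points is available, and you need a device that turns such an average into a bound on $\vol_g B(o,R)$. The paper does this with midpoint polar coordinates on $M\times M$ (Proposition~\ref{prop:ordered-pairs-k}). This converts the averaged Liouville estimate of Proposition~\ref{prop:liouville-tail-k} into $V(R)^2\lesssim (t/\rho)^{k/2}R^{n-k}V(R)$.

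\textbf{Gap 2: the step you call the main obstacle is left open, and Ricci curvature cannot close it.} The correct averaged input is the second-variation bound $\int_{-r}^{r}\Ric(\dot\gamma,\dot\gamma)\,ds\le 4(n-1)/r$ on the middle third of a minimizing segment of length $3r$. The flowed set $\phi_s(SU)$ does not have full fibers, so the fiber mean $\Scal/n$ is not usable. You need an angular estimate like Lemma~\ref{lem:angular-k}, whose exponent $k$ is the number of eigenvalues of the quadratic form bounded below by $\rho$.

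For $\Ric$ the hypotheses give only one such eigenvalue: the top eigenvalue is $\ge\sigma/n$, and nothing more.
\begin{itemize}
\item $\Scal$ can be carried pointwise by a single Ricci direction, e.g.\ $S^3$ with a left-invariant metric of Ricci signature $(+,0,0)$, times $\R^{n-3}$.
\item Sectional curvatures may be negative, so your ``no cancellation'' heuristic fails.
\item Jacobi comparison under $\Ric\ge0$ alone gives no control on neighbouring directions.
\end{itemize}
The outcome is $k=1$, i.e.\ only $R^{n-1}$ growth.

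The missing idea is a non-local substitute for $\Ric$ with \emph{two} uniformly positive eigenvalues. The paper uses the heat-transport deficit $D_t=g-G_t$, which satisfies the same centered-segment bound (Proposition~\ref{prop:centered-segment}). It proves $d_2(\sigma^{-1},x)\ge\rho_n$ (Proposition~\ref{c2:prop:heat-gap}) by combining two results:
\begin{itemize}
\item Proposition~\ref{prop:rank-k-heat-frame}: small eigenvalues of $D_t$ yield harmonic almost-splitting functions.
\item Lemma~\ref{c2:lem:local-splitting}: a Weitzenb\"ock argument for $dh^1\wedge\cdots\wedge dh^{n-1}$ showing that $\Scal\ge1$ forbids $(n-1)$-almost splitting at a fixed scale. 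This holds even when Ricci concentrates in one direction, and needs no noncollapsing.
\end{itemize}

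Your fallback supplies none of this.
\begin{itemize}
\item The expansion $(4\pi t)^{n/2}H(t,x,x)=1+\Scal(x)t/6+\cdots$ is not uniform under $\Ric\ge0$ alone, since there are no curvature or injectivity-radius bounds.
\item Blow-downs lose the scale $\sigma^{-1/2}$ and carry no usable scalar-curvature information.
\item Growth faster than $R^{n-2}$ does not by itself produce an $(n-1)$-dimensional factor.
\end{itemize}
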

Some historical remarks on this conjecture are in order.
\begin{remark}
    Under the stronger assumption $\sec\geq0$, Gromov claimed both codimensional volume growth bounds and outlined a splitting at infinity argument \cite{Gro85}. The more detailed proof for codimension two volume growth under positive scalar curvature was later given by Petrunin \cite{Petrunin_ScalarIntegral} and extended by Li \cite{Li2026CurvatureMeasure}. We will give a short proof of the codimension $1$ volume growth for nonnegatively curved Alexandrov spaces with unit ball volume deficit in Appendix \ref{thm:Gromov}.
    
    Although the two sets of conjectures \ref{conj:macro} and \ref{conj:vol} are under the same philosophy, the relation between volume growth order and macroscopic dimension for noncompact manifolds with nonnegative Ricci curvature remains unclear. For closed nonnegatively curved spaces, Perelman showed that volume is comparable with the product of all Urysohn widths, as conjectured by Gromov
\cites{Gromov1988Widths, Perelman1995Widths}. See \cite{YuZhu2024DimensionBounds} for a discussion on the mixture of both conjectures \ref{conj:macro} and \ref{conj:vol}, where nonnegative Ricci curvature and positive scalar curvature together bound the Assouad--Nagata dimension, a notion of large scale dimension stronger than macroscopic dimension.

\eqref{item:codim2vol} of Conjecture \ref{conj:vol} has been extensively studied in dimension $3$. Chodosh--Li--Stryker \cite{OLS23}, Munteanu--Wang \cite{MunteanuWang2022Geometry} showed linear volume growth with non-universal constants, B. Zhu-- X. Zhu\cite{ZhuZhu23} studied how the positive scalar curvature pass to Gromov Hausdorff limit and Wang \cite{YipengWang2024Effective} provided an effective and universal constant for the linear volume growth. Much less was known in higher dimensions. Some partial results in higher dimensions were obtained by B. Zhu \cite{Zhu_Geometryofpsc}, Wang--Xie--B. Zhu--X. Zhu  \cite{WangXieZhuZhu24}, X. Zhu \cite{XingyuZhu2023TwoDimension}, and Cucinotta--Mondino \cite{CucinottaMondino26}.

Cai \cite{Cai_Largemanifold} claimed to have proved \eqref{item:codim1vol} of Conjecture \ref{conj:vol}. However, there are several details that could not be verified. 
\end{remark}

\begin{remark}
    While preparing the draft, we learned of independent work by Antonelli \cite{antonelli2026universalvolumegrowthbounds} which proves Theorem \ref{thm:codim-two} and shows corresponding volume growth estimates under intermediate positive curvature assumptions. Although the methods of \cite{antonelli2026universalvolumegrowthbounds} is quite different from ours, the spirit of relating positive curvature bounds with number of almost splitting factors remains the same. Compare Lemma \ref{c2:lem:local-splitting} and \cite{antonelli2026universalvolumegrowthbounds}*{Proposition 3.1}. Another independent work of Koirala \cite{koirala2026volumegrowthpositiveintermediate} also proved similar results and local volume estimates under general Ricci lower bounds using heat kernel methods. This line of thoughts is then further developed by Han \cite{han2026heatcodimensionscalarcurvature}. 
\end{remark}

\subsection{Strategy of proof}
Several new observations emerge in the proof. First, we observe that there is a rigidity associated to the heat kernel pullback of the $2$-Wasserstein metric introduced by Gigli--Mantegazza \cite{GigliMantegazza2014HeatKernel}. Compare the pullback metric from $L^2(M)$ \cites{BBG94,AHPT}. At time $t>0$, the heat-kernel embedding of $(M,g)$ into the Wasserstein space $(\mathcal P_2(M),W_2)$ determines a Riemannian metric $G_t\le g$. An equivalent definition we use is
\begin{equation}\label{eq:defGt}
G_{t,x}(v,v)\defeq\sup_{\psi\in C_c^\infty(M)}
  \left\{2\langle\nabla P_t\psi(x),v\rangle-P_t|\nabla\psi|^2(x)\right\}, \quad x\in M, v\in T_xM.
\end{equation}
We will consider the deficit $D_t\defeq g-G_t$.

The new rigidity phenomenon we observe is that the number of zero eigenvalues of $D_t$ gives rise to the same number of splitting functions, hence $M$ splits off the same number of $\R$-factors. This is in fact the rigidity of the equality case of $2$-Bakry-Emery inequality, which is briefly discussed by Ambrosio--Brue--Semola \cite{AmbrosioBrueSemola2019Rigidity}. To explain the idea we assume that there exists a point $x\in M$ at which the zero eigenvalue of $D_t$ has multiplicity $2$. Let $e_1$, $e_2$ be an orthonormal basis spanning the eigenspace in $T_xM$. For illustrative purposes and to avoid technical complexity, we assume the supremum in \eqref{eq:defGt} is attained by a smooth function for every $v\in \mathrm{span}\{e_1,e_2\}$. Then there are smooth functions $\psi_1$ and $\psi_2$ so that for $i=1,2$
\begin{align*}
    0&=g(e_i,e_i)-G_{t,x}(e_i,e_i)\\
    &=|e_i|^2-2\langle \nabla P_t\psi_i(x),e_i\rangle+P_t(|\nabla \psi_i|^2)(x)\\
    &=|\nabla P_t\psi_i(x)-e_i|^2+P_t(|\nabla \psi_i|^2)(x)-|\nabla P_t\psi_i(x)|^2\ge 0.
\end{align*}
We see that the vectors $\nabla P_t\psi_i(x)=e_i$ are orthogonal for $i=1,2$, and equality in the $2$-Bakry-Emery inequality holds:
$P_t(|\nabla \psi_i|^2)(x)=|\nabla P_t\psi_i(x)|^2$. Then taking the derivative of $F_i(s)=P_s(|\nabla P_{t-s} \psi_i|^2)(x)$ and applying the Bochner formula yields that $P_t\psi_i$ are both harmonic splitting functions, together inducing a splitting $\R^2$-factor. The complete proof of a slightly more general almost-rigidity statement will be given in Proposition \ref{prop:rank-k-heat-frame}.

In particular, if all eigenvalues of $D_t$ are zero at a point, equivalently, $\tr_g G_t=\tr_g g=n$, then $M$ will be isometric to $\R^n$. See Theorem~\ref{thm:maximal-trace-almost-rigidity}.

This (almost) rigidity will be applied to the two main theorems. In the setting of Theorem \ref{thm:codim-one}, the unit ball volume deficit implies that no $n$-almost splitting can happen at scale at least $1$, which in turn implies $D_t$ must have uniformly positive trace at every point, hence at least one uniformly positive eigenvalue at every point. In the setting of Theorem \ref{thm:codim-two}, another new observation is that uniformly positive scalar curvature $\Scal_g\ge 1$ will prevent $(n-1)$-almost splitting at large scale. This was previously achieved only under an additional volume noncollapsed assumption \cites{WangXieZhuZhu24,CucinottaMondino26}. We will explain why we can drop the noncollapsed assumption around Lemma \ref{c2:lem:local-splitting}. Combined with the splitting rigidity of $D_t$, $\Scal_g\ge 1$ implies that at least two eigenvalues of $D_t$ are positive.

Finally we will turn the number of positive eigenvalues into the loss in volume growth order. Note that the minimal number of positive eigenvalues of $D_t$ matches with the codimension in each setting. This relation is revealed in Lemma \ref{lem:angular-k}. Heuristically, $D_t$ detects the directions in which geodesics cannot remain long on average. Indeed, consider a minimizing geodesic $\gamma:[-3r/2,3r/2]\to M$. If the largest eigenvalue of $D_t$ has a uniform positive lower bound $\rho>0$ along $\gamma$, with corresponding unit eigenvector $v_{\gamma(s)}$, then Proposition \ref{prop:centered-segment}, which gives an upper bound for the distance distortion along $\gamma$, yields
\[
 \int_{-r}^r\rho\left|\langle\dot\gamma(s),v_{\gamma(s)} \rangle\right|^2\, ds\le \int_{-r}^{r}D_t(\dot\gamma,\dot\gamma)\,ds
  \leq\frac{8nt}{r}.
\]
When $r$ is large while $t,\rho>0$ are fixed, we see that, on average, $\dot\gamma$ is nearly perpendicular to the eigen-direction. It suggests the positive eigenvalue directions will contribute much less to the volume growth of a geodesic ball. To apply this idea to all geodesics, a geodesic flow argument and a pairing are needed, which are the contents of Proposition \ref{prop:liouville-tail-k} and \ref{prop:ordered-pairs-k}.

\begin{remark}
    The constant in front of the volume growth power depends on the positive lower bound of the eigenvalues of $D_t$, which is provided by an argument by contradiction. However, this constant is effective. It is possible to compute the lower eigenvalue bound based on scalar curvature lower bound.  
\end{remark}

\medskip
\noindent\textbf{Acknowledgements.} 
X.Z is supported by AMS Travel Fund.

\noindent\textbf{AI disclosure.}
Generative AI tools, more explicitly, ChatGPT 5.6 Sol Ultra and Codex, assisted with proof exploration, organization, and drafting. Essential ideas were generated by AI. The authors reviewed, edited, and remain responsible for all theorem statements, proofs and citations. In particular, most of the proofs are different from the original proof generated by AI. The heat pullback metric idea and the Liouville measure averaging trick are AI insights. The Appendix does not have AI use other than text editing.  

During the preparation of this draft, we became aware of the preprint \cite{ge2026heatkernelgeometrygromovs}, in which an independent proof of Conjecture~\ref{conj:vol} was announced using very similar ideas and was reportedly developed without AI assistance. The AI-assisted proof presented in this article was produced before \cite{ge2026heatkernelgeometrygromovs} became publicly available, and we did not use the aforementioned preprint as a source in our AI-assisted work.

\section{The heat-transport deficit}\label{sec:heat-transport}
In this section we review \cite{GigliMantegazza2014HeatKernel} and prove some distortion estimates of the pullback of the Wasserstein metric. Let $P_t$ denote the heat semigroup for $\partial_tu=\Delta u$, with heat
kernel $p_t$, and put
\[
  \mu_{t,x}=p_t(x,\cdot)\,d\vol_g.
\]
On a complete manifold with nonnegative Ricci curvature, this is a probability measure with finite second moment, i.e. $\mu_{t,x}\in \mathcal{P}_2(M)$. 
For $v\in T_xM$, \cite{GigliMantegazza2014HeatKernel} defined a pullback metric
\begin{equation}\label{eq:heat-metric}
  G_{t,x}(v,v)
  :=\sup_{\psi\in C_c^\infty(M)}
  \left\{2\langle\nabla P_t\psi(x),v\rangle
             -P_t|\nabla\psi|^2(x)\right\},
  \qquad D_t:=g-G_t.
\end{equation}
Here we used the variational form in Gigli--Mantegazza \cite{GigliMantegazza2014HeatKernel}*{Proof of Proposition 3.7}. It is a pullback metric by the following observation. Fix $t>0$, let $\iota_t: M\to \mathcal{P}_2(M)$, such that $\iota_t(x)=\mu_{t,x}$. For every $\mu\in \mathcal{P}_2(M)$ there is a tangent space 
\[\mathrm{Tan}_{\mu}\mathcal{P}_2(M)=\overline{\{\nabla\phi:\phi\in C_c^\infty(M)}\}^{L^2(\mu)}\]
with an inner product
\[
g_{W_2,\mu}(V,W)=\int_M \langle V,W\rangle\, d\mu,
\]
then $G_t=\iota_t^*g_{W_2}$.
The variational form is more suitable as it involves the Bakry--Emery contraction estimates.

\begin{proposition}\label{prop:heat-transport}
Let $(M,g)$ be complete with $\Ric\geq0$. For every $t>0$, $G_t$ defined in
\eqref{eq:heat-metric} is a Riemannian metric satisfying
\begin{equation}\label{eq:defect-positive}
  0\leq G_t\leq g.
\end{equation}
If $\gamma:[a,b]\to M$ is a smooth curve, then
\begin{equation}\label{eq:wasserstein-upper}
  W_2(\mu_{t,\gamma(a)},\mu_{t,\gamma(b)})
  \leq\int_a^b\sqrt{G_t(\dot\gamma,\dot\gamma)}\,ds.
\end{equation}
\end{proposition}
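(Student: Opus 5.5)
The plan has three parts: show that $G_t$ is a quadratic form squeezed between $0$ and $g$, show it depends smoothly on $x$, and prove the length estimate \eqref{eq:wasserstein-upper} by Kantorovich duality along the curve.

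\textbf{Quadratic form and the bounds $0\le G_t\le g$.} Write $A_x(\psi)=\nabla P_t\psi(x)\in T_xM$ and $B_x(\psi)=P_t|\nabla\psi|^2(x)$, which is nonnegative and $2$-homogeneous. Replacing $\psi$ by $\lambda\psi$ and optimizing over $\lambda\in\R$ gives
\[
G_{t,x}(v,v)=\sup_{\psi}\frac{\langle A_x(\psi),v\rangle^2}{B_x(\psi)} .
\]
Taking $\psi=0$ shows $G_t\ge0$. For the upper bound, the Bakry--Émery gradient estimate under $\Ric\ge0$ reads $|\nabla P_t\psi|^2\le P_t|\nabla\psi|^2$. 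Cauchy--Schwarz then gives
\[
2\langle A,v\rangle-B\le 2|A||v|-|A|^2\le|v|^2 ,
\]
so $G_t\le g$.

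To see that $G_{t,x}$ is a genuine quadratic form, I would identify it via the Hilbert space $H_x$: the closure of $\{\nabla\psi:\psi\in C_c^\infty(M)\}$ in $L^2(\mu_{t,x})$. The map $\nabla\psi\mapsto A_x(\psi)$ is well defined and bounded on $H_x$. Indeed, $\nabla P_t\psi(x)=-\int\psi\,\nabla_x p_t(x,\cdot)\,d\vol$. By the Bakry--Émery bound, $|A_x(\psi)|\le\|\nabla\psi\|_{L^2(\mu_{t,x})}$, so $A_x$ depends only on $\nabla\psi$ and extends continuously. Hence $A_x:H_x\to T_xM$ is a bounded linear map of norm at most $1$. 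The supremum above is then exactly $|A_x^*v|_{H_x}^2$, which equals $\langle A_xA_x^*v,v\rangle$. This is a quadratic form in $v$, and it is the pullback metric $\iota_t^*g_{W_2}$.

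\textbf{Smoothness and positive definiteness.} I expect regularity in $x$ to be the main technical obstacle, because the spaces $H_x$ vary with $x$. To handle this, I would transfer everything to the fixed space $L^2(\vol)$ using the weight $\sqrt{p_t(x,\cdot)}$. Alternatively, I would write $G_t$ through the Gigli--Mantegazza construction: solve the weighted elliptic equation $\operatorname{div}(p_t(x,\cdot)\nabla\phi_v)=-\partial_v p_t(x,\cdot)$. Then $G_{t,x}(v,v)=\int|\nabla\phi_v|^2\,d\mu_{t,x}$, and one invokes their smoothness result \cite{GigliMantegazza2014HeatKernel}. I would simply cite that result, since the paper states it reviews that work.

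Positive definiteness, which is needed for "Riemannian", follows by choosing $\psi$ to be a cutoff of a local coordinate function. In that case $A_x(\psi)$ is close to a prescribed vector once the cutoff region is large, so $\langle A_x(\psi),v\rangle\neq0$ and hence $G_{t,x}(v,v)>0$.

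\textbf{The length estimate \eqref{eq:wasserstein-upper}.} I would prove this by Kantorovich duality in Hopf--Lax form:
\[
\tfrac12W_2^2(\mu_0,\mu_1)=\sup\Big\{\int Q_1\varphi\,d\mu_1-\int\varphi\,d\mu_0\Big\},
\]
where $Q_s\varphi$ solves the Hamilton--Jacobi equation $\partial_sQ_s\varphi+\tfrac12|\nabla Q_s\varphi|^2=0$.

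First reparametrize $\gamma$ on $[0,1]$ proportionally to $G_t$-arclength, so that $G_t(\dot\gamma,\dot\gamma)\equiv L^2$ with $L$ the right-hand side of \eqref{eq:wasserstein-upper}. Fix a Lipschitz $\varphi$ with compact support and set $\varphi_s=Q_s\varphi$. Differentiating along the curve,
\[
\frac{d}{ds}\int\varphi_s\,d\mu_{t,\gamma(s)}=\int\partial_s\varphi_s\,d\mu_{t,\gamma(s)}+\langle\nabla P_t\varphi_s(\gamma(s)),\dot\gamma(s)\rangle .
\]
The first term equals $-\tfrac12P_t|\nabla\varphi_s|^2(\gamma(s))$. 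Applying the variational definition \eqref{eq:heat-metric} with $\psi=\varphi_s$ bounds the sum by $\tfrac12G_t(\dot\gamma,\dot\gamma)=\tfrac12L^2$. Integrating over $s\in[0,1]$ and taking the supremum over $\varphi$ gives $W_2^2\le L^2$.

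This step uses non-smooth test functions $\varphi_s$, which requires an approximation argument. I would handle it by mollifying, using that the definition extends to Lipschitz $\psi$ by density. Alternatively, I would apply Gigli--Mantegazza's identification of $G_t$ as the metric speed of $s\mapsto\mu_{t,\gamma(s)}$ directly.
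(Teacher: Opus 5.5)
Your proof of $0\le G_t\le g$ is the paper's argument (Bakry--\'{E}mery plus completing the square). For \eqref{eq:wasserstein-upper}, however, you take a genuinely different route.

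The paper argues by citation. It invokes Gigli--Mantegazza's Proposition~3.5, asserting that compactness is not needed once $\mu_{t,x}\in\mathcal P_2(M)$. This gives that $s\mapsto\mu_{t,\gamma(s)}$ is $W_2$-absolutely continuous with metric speed $|\dot\mu_{t,\gamma(s)}|^2=G_t(\dot\gamma,\dot\gamma)$, and the paper then integrates the metric speed.

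Your Hopf--Lax duality argument instead derives the inequality directly from the variational formula \eqref{eq:heat-metric}. It uses only:
\begin{itemize}
\item $\mu_{t,x}\in\mathcal P_2(M)$;
\item Kantorovich duality over Lipschitz functions with bounded support;
\item the subsolution inequality $\partial_sQ_s\varphi\le-\frac12|\nabla Q_s\varphi|^2$ a.e.;
\item the extension of the supremum in \eqref{eq:heat-metric} to such Lipschitz $\psi$.
\end{itemize}
This makes the noncompact case self-contained. The citation, when it applies, gives the sharper equality of metric speeds, i.e.\ that $G_t$ really is $\iota_t^*g_{W_2}$. You only obtain the ``$\le$'' half, but that is all the paper uses.

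Your adjoint identification $G_{t,x}(v,v)=|A_x^*v|_{H_x}^2$ also supplies something the paper takes for granted: that the supremum is a quadratic form in $v$. This is what is needed later to speak of eigenvalues of $D_t$. There is a minor slip: $\nabla P_t\psi(x)=\int\psi\,\nabla_xp_t(x,\cdot)\,d\vol_g$, without the minus sign.

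The step that fails as written is positive definiteness. For fixed $t$, $\nabla P_t\psi(x)=\int\psi\,\nabla_xp_t(x,\cdot)\,d\vol_g$ sees $\psi$ at scale $\sqrt t$ and beyond. A cutoff of a local coordinate function has support confined to a chart, so it cannot be made large, and it gives no control on the direction of $\nabla P_t\psi(x)$. The heuristic $\nabla P_t\psi(x)\approx\nabla\psi(x)$ is valid only when $M$ is nearly Euclidean at scale $\sqrt t$ around $x$. For example, on a closed manifold with $\Ric\ge0$ and $t$ large, $\nabla P_t\psi(x)$ is governed by the projection of $\psi$ onto the eigenspace of the first nonzero eigenvalue, not by the behaviour of $\psi$ near $x$.

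What you actually need is only that $A_x$ is onto, i.e.\ $f_t:=\partial_vp_t(x,\cdot)\not\equiv0$ for $v\ne0$. Here is one way to get it:
\begin{itemize}
\item Differentiating the semigroup identity gives $f_{s+\tau}=P_\tau f_s$ in $L^2$.
\item $P_\tau$ is injective on $L^2$, because $\|P_{\tau/2}f\|_{L^2}^2=\langle P_\tau f,f\rangle_{L^2}$; iterate and use strong continuity.
\item Hence $f_t=0$ would force $f_s=0$ for all $0<s\le t$.
\item Then $\langle\nabla\psi(x),v\rangle=\lim_{s\to0}\langle\nabla P_s\psi(x),v\rangle=0$ for every $\psi\in C_c^\infty(M)$, which is absurd.
\end{itemize}
Alternatively, cite Gigli--Mantegazza, as you already do for smoothness. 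The paper's own proof addresses neither nondegeneracy nor smoothness, and nothing later uses them.

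Finally, your constant-$G_t$-speed reparametrization tacitly uses nondegeneracy. You can sidestep this by first bounding $\frac12W_2^2$ by the energy $\frac12\int_0^1G_t(\dot\gamma,\dot\gamma)\,ds$. Then reparametrize with respect to $G_t+\epsilon g$ and let $\epsilon\to0$.
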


\begin{proof}
Take any $\psi\in C_c^\infty(M)$. The Bakry--\'{E}mery estimate
\begin{equation}\label{eq:bakry}
  |\nabla P_t\psi|^2\leq P_t|\nabla\psi|^2
\end{equation}
holds under nonnegative Ricci curvature
\cite{AmbrosioGigliSavare2015BakryEmery}.  For fixed $x$ and $v$, it is straightforward to verify that 
\[
2\langle \nabla P_t\psi(x),v\rangle-P_t(|\nabla \psi|^2)(x)=-|\nabla P_t\psi(x)-v|^2+|v|^2+|\nabla P_t\psi(x)|^2-P_t(|\nabla \psi|^2)(x)\le |v|^2.
\]
Taking supremum over $\psi$ gives $G_t\le g$. Taking $\psi=0$ gives $G_t\ge 0$. Notice that  \cite{GigliMantegazza2014HeatKernel}*{Proposition 3.5} does not need compactness, which is the setting of \cite{GigliMantegazza2014HeatKernel}, as long as the measures $\mu_{t,\gamma(s)}\in \mathcal P_2(M)$. Then it implies the Wasserstein curve $s\mapsto\mu_{t,\gamma(s)}$ is absolutely continuous in $W_2$ metric and 
\[
G_t(\gamma_s',\gamma_s')=\left|\dot\mu_{t,\gamma(s)}\right|^2\quad \text{a.e. $s\in[a,b]$}.
\]
Then it follows
\[
 W_2(\mu_{t,\gamma(a)},\mu_{t,\gamma(b)})
  \leq\int_a^b \left|\dot\mu_{t,\gamma(s)}\right|\, ds=\int_a^b\sqrt{G_t(\dot\gamma,\dot\gamma)}\,ds,
\]
as desired.
\end{proof}

Given that $G_t\le g$, we can compare the length of a curve in the length metrics induced by both $G_t$ and $g$. The next proposition measures the distance distortion in the interior of a geodesic w.r.t. $g$.
\begin{proposition}\label{prop:centered-segment}
Fix $t,r>0$, and let $\gamma:[-3r/2,3r/2]\to M$ be a unit-speed minimizing geodesic.  Then
\begin{equation}\label{eq:centered-estimate}
  \int_{-r}^{r}D_t(\dot\gamma,\dot\gamma)\,ds
  \leq2\bigl(\sqrt{r^2+8nt}-r\bigr)
  \leq\frac{8nt}{r}.
\end{equation}
\end{proposition}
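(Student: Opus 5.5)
The plan is to compare the $G_t$-length of $\gamma|_{[-r,r]}$ with a Wasserstein distance, using \eqref{eq:wasserstein-upper}, and then bound that Wasserstein distance from below by the $g$-distance up to an error controlled by the heat-kernel second moment. I have not confirmed that the lower bound in the second step holds with exactly the constant $8nt$, and that step is the main obstacle.

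\textbf{Moment bound.} First I would record the moment bound: for every $p,y\in M$,
\[
W_2(\delta_p,\mu_{t,y})^2=\int_M d(p,z)^2\,p_t(y,z)\,d\vol_g(z)\le d(p,y)^2+2nt.
\]
This follows by differentiating in $t$ and using the Laplacian comparison $\Delta d_p^2\le 2n$ under $\Ric\ge0$.

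\textbf{Reduction to an integral of $\sqrt{G_t}$.} Put $x_\pm=\gamma(\pm r)$ and $L=\int_{-r}^{r}\sqrt{G_t(\dot\gamma,\dot\gamma)}\,ds$. By \eqref{eq:wasserstein-upper}, $L\ge W_2(\mu_{t,x_-},\mu_{t,x_+})$. By Proposition~\ref{prop:heat-transport}, $0\le G_t(\dot\gamma,\dot\gamma)\le1$, so $1-G_t\le 2\bigl(1-\sqrt{G_t}\bigr)$. Hence
\[
\int_{-r}^rD_t(\dot\gamma,\dot\gamma)\,ds\le 2(2r-L),
\]
and it suffices to prove
\[
L\ \ge\ 3r-\sqrt{r^2+8nt}.
\]
Since $\sqrt{r^2+8nt}-r\le 4nt/r$, the second inequality in \eqref{eq:centered-estimate} is then elementary.

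\textbf{Lower bound on $L$ (main obstacle).} This is where the extension of $\gamma$ to $[-3r/2,3r/2]$ must be used. The auxiliary points are $A=\gamma(-3r/2)$ and $B=\gamma(3r/2)$, each at distance $r/2$ from the nearest of $x_\pm$ and at distance $5r/2$ from the other. I would take the decomposition of $\gamma$ into its pieces of length $r/2$, $2r$, $r/2$ and run the triangle inequality in $(\mathcal P_2(M),W_2)$ through the Diracs $\delta_A,\delta_B$. The outer pieces would be controlled by the moment bound. The distance $d(A,B)=3r$ would have to be recovered from a \emph{lower} bound on $W_2(\delta_A,\mu_{t,x_+})$ and $W_2(\delta_B,\mu_{t,x_-})$.

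The lower bound is the delicate point. Jensen gives $W_2(\delta_A,\mu_{t,y})\ge P_t d_A(y)$, but Laplacian comparison only yields $\Delta d_A\le (n-1)/d_A$. That inequality goes the wrong way for bounding $P_td_A$ from below.

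What I would try first is the EVI characterization of the heat flow as the $W_2$-gradient flow of the entropy under $\Ric\ge0$, in the form
\[
\frac{d}{dt}\,\frac12 W_2^2(\mu_{t,y},\delta_A)\le \mathrm{Ent}(\delta_A)-\mathrm{Ent}(\mu_{t,y}).
\]
I would apply this in the regularized version with $\delta_A$ replaced by a small ball around $A$. Alternatively I would use the dual Kantorovich formulation with the $1$-Lipschitz test function
\[
f=\tfrac12\bigl(d_A-d_B\bigr),
\]
which equals $s$ along $\gamma$. I would control $P_tf$ near $x_\pm$ by Bochner and Laplacian comparison on $\{d_A,d_B\ge r/2\}$; this is exactly where the buffer of length $r/2$ enters.

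Either route should give a deficit of the form
\[
3r-\sqrt{(3r)^2-\text{const}}\quad\text{or}\quad \sqrt{r^2+8nt}-r,
\]
and solving the resulting quadratic inequality for $L$ would produce the stated expression $2\bigl(\sqrt{r^2+8nt}-r\bigr)$.
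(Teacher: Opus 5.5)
Your moment bound and your reduction are correct and match the paper. From $1-G_t\le 2(1-\sqrt{G_t})$ and \eqref{eq:wasserstein-upper}, it suffices to show $W_2(\mu_{t,x_-},\mu_{t,x_+})\ge 3r-\sqrt{r^2+8nt}$. That lower bound is the whole content of the proposition, and you leave it open. The routes you sketch for it (EVI with a regularized Dirac, Bochner-type control of $P_tf$ on $\{d_A,d_B\ge r/2\}$, a ``quadratic inequality'') are neither carried out nor needed. As written, the proposal does not prove the statement.

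The missing observation is that you never need a lower bound on $P_td_A$ from Laplacian comparison. The pointwise triangle inequality $d_A+d_B\ge d(A,B)=3r$ turns the required lower bound into upper bounds, and your moment bound already supplies those. Concretely, since $W_2(\delta_A,\delta_B)=3r$,
\[
3r\le W_2(\delta_A,\mu_{t,x_-})+W_2(\mu_{t,x_-},\mu_{t,x_+})+W_2(\mu_{t,x_+},\delta_B)\le 2\sqrt{r^2/4+2nt}+W_2(\mu_{t,x_-},\mu_{t,x_+}).
\]
Since $2\sqrt{r^2/4+2nt}=\sqrt{r^2+8nt}$, this is exactly the bound you need. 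You went astray in saying that $d(A,B)=3r$ ``would have to be recovered from a lower bound on $W_2(\delta_A,\mu_{t,x_+})$''. You only need the triangle inequality through both Diracs at once, with upper bounds on the two outer legs.

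The paper writes the same argument via Kantorovich duality. It uses $W_2\ge W_1(\mu_{t,x},\mu_{t,y})\ge P_td_p(y)-P_td_p(x)$, with $P_td_p(y)\ge 3r-P_td_q(y)$ from $d_p+d_q\ge 3r$. It then bounds $P_td_p(x)$ and $P_td_q(y)$ above by $\sqrt{r^2/4+2nt}$ using Jensen and the moment bound. Your test function $f=\frac12(d_A-d_B)$ would also work with this same triangle-inequality step, without any Bochner argument.
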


\begin{proof}
Put
\[
  p=\gamma(-3r/2),\quad x=\gamma(-r),\quad
  y=\gamma(r),\quad q=\gamma(3r/2).
\]
Laplacian comparison gives, for $d_a=d(a,\cdot)$,
\begin{equation}\label{eq:squared-distance-heat}
  P_t(d_a^2)(z)\leq d(z,a)^2+2nt.
\end{equation}
Since $d_p+d_q\geq d(p,q)=3r$, Kantorovich duality of $1$-Wasserstein distance,
\eqref{eq:squared-distance-heat}, and Cauchy--Schwarz yield
\begin{align}
  W_2(\mu_{t,x},\mu_{t,y})
  \geq W_1(\mu_{t,x},\mu_{t,y})\geq P_td_p(y)-P_td_p(x)
  \geq3r-2\sqrt{r^2/4+2nt}.
  \label{eq:wasserstein-lower}
\end{align}

Indeed, by \eqref{eq:squared-distance-heat}, $P_td_p(x),P_td_q(y)\leq\sqrt{r^2/4+2nt}$, whereas
$P_td_p(y)\geq3r-P_td_q(y)$.

Apply Proposition~\ref{prop:heat-transport} to 
$\gamma|_{[-r,r]}$. Since
$G_t(\dot\gamma,\dot\gamma)=1-D_t(\dot\gamma,\dot\gamma)$ and
$\sqrt{1-a}\leq1-a/2$ for $0\leq a\leq1$, \eqref{eq:wasserstein-upper} gives
\[
  W_2(\mu_{t,x},\mu_{t,y})
  \leq 2r-\frac12\int_{-r}^{r}
       D_t(\dot\gamma,\dot\gamma)\,ds.
\]
Comparison with \eqref{eq:wasserstein-lower} proves the first inequality in \eqref{eq:centered-estimate} and the second follows from elementary algebra. This completes the proof.
\end{proof}

 \section{Maximal trace and rigidity}

This section isolates the almost rigidity associated with maximal trace of $G_t$, which may be of independent interest. It will be used to show that a uniform deficit in the volume of unit balls forces a positive trace lower bound for $D_t=g-G_t$; see
Proposition~\ref{c1:prop:trace-gap}.  We first recall the scaling law of the
heat operator and the associated quadratic form.  If $g_r=r^{-2}g$, then
\begin{equation}\label{eq:heat-metric-scaling}
  P_s^{g_r}=P_{r^2s}^g,
  \qquad G_s^{g_r}=r^{-2}G_{r^2s}^g,
  \qquad
  \tr_{g_r}D_s^{g_r}=\tr_gD_{r^2s}^g.
\end{equation}

For $1\leq k\leq n$, define the rank-$k$ heat deficit by
\begin{equation}\label{eq:rank-k-heat-defect}
  \vartheta_k(t,x)
  =\min_{(e_1,\ldots,e_k)}\sum_{a=1}^kD_t(e_a,e_a),
\end{equation}
where the minimum is taken over orthonormal $k$-frames in $T_xM$.  Thus
$\vartheta_k(t,x)$ is the sum of the $k$ smallest eigenvalues of $D_t(x)$,
and $\vartheta_n(t,x)=\tr D_t(x)$.

The model for $\vartheta_k(t,x)=0$ is the rigidity of $2$-Bakry--\'{E}mery contraction inequality recorded by Ambrosio--Bru\`e--Semola
\cite{AmbrosioBrueSemola2019Rigidity} for weighted manifolds. 
The proposition below is the corresponding quantitative version encoded by $D_t$. Its rank-$n$ case gives Theorem~\ref{thm:maximal-trace-almost-rigidity},
while its rank-$(n-1)$ case is the ingredient for
Proposition~\ref{c2:prop:heat-gap}.

\begin{theorem}
\label{thm:maximal-trace-almost-rigidity}
For every $n\geq2$, $A>0$, $\eta_{GH}>0$, and
$\eta_{\mathrm{vol}}>0$, there is
$\delta=\delta(n,A,\eta_{GH},\eta_{\mathrm{vol}})>0$ with the following property.
If $(M^n,g)$ is complete with $\Ric_g\geq0$, $x\in M$, and $t>0$, then
\begin{equation}\label{eq:almost-maximal-trace}
  \tr_gG_t(x)\geq n-\delta
\end{equation}
implies
\begin{equation}\label{eq:almost-rigidity-gh}
  d_{GH}\!\left(
    B_g(x,A\sqrt t),
    B_{\mathbb R^n}(0,A\sqrt t)
  \right)<\eta_{GH}\sqrt t,
\end{equation}
and
\begin{equation}\label{eq:almost-rigidity-volume}
  \left|t^{-n/2}\vol_g B_g(x,A\sqrt t)-\omega_nA^n\right|
  <\eta_{\mathrm{vol}}.
\end{equation}
\end{theorem}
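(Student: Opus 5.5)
By the scaling law \eqref{eq:heat-metric-scaling} we may assume $t=1$. We then argue by contradiction and compactness. Suppose there are $(M_j,g_j,x_j)$ with $\Ric\ge0$ and $\tr G_1(x_j)\ge n-\delta_j$, where $\delta_j\to0$, but for which \eqref{eq:almost-rigidity-gh} or \eqref{eq:almost-rigidity-volume} fails. The plan is to produce, near $x_j$, $n$ functions that are almost splitting maps at scale comparable to $A$. The almost splitting theorem of Cheeger--Colding then gives Gromov--Hausdorff closeness to $B_{\R^n}(0,A)$. Volume convergence (Colding, and Cheeger--Colding in the noncollapsed limit) gives \eqref{eq:almost-rigidity-volume}. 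Noncollapsing holds automatically here: an $n$-dimensional Euclidean GH limit of $n$-manifolds with $\Ric\ge0$ forces the volume ratio to converge to $\omega_nA^n$. This contradicts the assumption.

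\textbf{Step 1 (almost optimizers).} Let $e_1,\dots,e_n$ be an orthonormal frame at $x$. Since $\sum_a D_1(e_a,e_a)\le\delta$, for each $a$ we choose $\psi_a\in C_c^\infty$ nearly attaining the supremum in \eqref{eq:heat-metric}. Using the identity in the proof of Proposition~\ref{prop:heat-transport}, we get
\[
|\nabla P_1\psi_a(x)-e_a|^2+\bigl(P_1|\nabla\psi_a|^2(x)-|\nabla P_1\psi_a(x)|^2\bigr)\le 2\delta .
\]
So the Bakry--\'Emery defect of $\psi_a$ at $x$ is at most $2\delta$, and the gradients $\nabla P_1\psi_a(x)$ are almost orthonormal. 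The same holds for any combination $\sum c_a\psi_a$ with $|c|=1$, up to $C(n)\delta$, by applying the bound to $D_1(v,v)\le \tr D_1$.

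\textbf{Step 2 (Bochner along the semigroup).} Set $F(s)=P_s(|\nabla P_{1-s}\psi|^2)(x)$ for $s\in[0,1]$. Then
\[
F'(s)=2P_s\bigl(|\nabla^2P_{1-s}\psi|^2+\Ric(\nabla P_{1-s}\psi,\nabla P_{1-s}\psi)\bigr)(x)\ge0 ,
\]
and $F(1)-F(0)\le2\delta$. Integrating therefore gives
\[
\int_0^1 P_s\bigl(|\nabla^2 P_{1-s}\psi|^2\bigr)(x)\,ds\le \delta .
\]
We restrict to $s\in[1/4,3/4]$ and use the Li--Yau Gaussian lower bound $p_s(x,y)\ge c(n)\vol B(x,\sqrt s)^{-1}e^{-d^2/(5s)}$. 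This converts the bound into
\[
\dashint_{B(x,A')}|\nabla^2 u_{a,s}|^2\le C(n,A')\delta ,
\qquad u_{a,s}=P_{1-s}\psi_a ,
\]
for a suitable $s$ chosen by averaging over $s$ via Fubini, and with $A'$ slightly larger than $A$.

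\textbf{Step 3 (controlling gradients).} We also need $\dashint_{B(x,A')}\bigl|\langle\nabla u_a,\nabla u_b\rangle-\delta_{ab}\bigr|\to0$ and a uniform gradient bound. The difficulty is that the $\psi_a$ are arbitrary, so $|\nabla\psi_a|$ need not be bounded. A Lipschitz bound for $u_{a,s}$ can still be obtained from Bakry--\'Emery: $|\nabla P_{1-s}\psi_a|^2\le P_{1-s}|\nabla\psi_a|^2$. In addition, $P_1|\nabla\psi_a|^2(x)\le 1+2\delta$ controls the weighted $L^2$ norm of $|\nabla\psi_a|$. Harnack then gives local $L^2$ bounds on $|\nabla u_{a,s}|$ over $B(x,A')$, depending on $A'$. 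Upgrading this to pointwise control uses Cheng--Yau or Li--Yau for the subcaloric function $|\nabla P_\tau\psi|^2$, namely the mean value inequality for $P_\tau|\nabla\psi|^2$ in space-time. Near-orthonormality at $x$, together with the small Hessian, propagates via Poincar\'e (Buser) to the $L^2$ smallness of $\langle\nabla u_a,\nabla u_b\rangle-\delta_{ab}$ on $B(x,A')$. The $u_{a,s}$ are not harmonic; they solve the heat equation. Their Laplacian is $\Delta u=\tr\nabla^2u$, which is small in $L^2$ by Step~2, and this suffices for the Cheeger--Colding almost splitting via $\epsilon$-splitting maps (one may also replace $u$ by its harmonic replacement on the ball).

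\textbf{Main obstacle.} I expect Step 3 to be the main difficulty, specifically the passage from information concentrated at the single point $x$, weighted by the heat kernel, to uniform ball estimates with constants depending only on $n$ and $A$. Gaussian weights and collapsing volumes make the Harnack comparisons delicate. The time-averaging in $s$ and Li--Yau's parabolic Harnack inequality are the tools I would try first.
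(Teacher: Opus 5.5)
Your overall architecture matches the paper's: rescale to $t=1$, argue by contradiction, take near-optimizers $\psi_a$ of the supremum defining $G_1$, use the monotone Bochner quantity $s\mapsto P_s|\nabla P_{1-s}\psi|^2(x)$ to make Hessians small, localize with the Li--Yau lower bound, pass to harmonic replacements, and finish with Cheeger--Colding and Colding's volume convergence. Your Steps 1--2 follow its argument.

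The genuine gap is Step 3. You never obtain $\dashint_{B(x,A')}|\langle\nabla u_a,\nabla u_b\rangle-\delta_{ab}|\to0$, and the mechanism you sketch cannot give it.
\begin{itemize}
\item The near-orthonormal vectors at $x$ are $\nabla P_1\psi_a(x)$, the gradients of $u_{a,0}$, not of $u_{a,s}$ at your chosen time.
\item A point value cannot be propagated by an integral Poincar\'e inequality when you only know the Hessian is small in $L^2$.
\item A Buser-type inequality on $B(x,A')$ applied to $|\nabla u_c|$ only shows that $|\nabla u_c|$ is $L^2$-close to \emph{some} constant on that ball. It does not identify the constant as $|c|$.
\end{itemize}
The only information that pins the constant down is heat-kernel weighted over all of $M$. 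So what you need is a variance bound in the measure $\mu_{s,x}$ itself, tail included, and a Poincar\'e inequality on a fixed ball does not supply that.

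The missing idea, which is how the paper proves its rank-$k$ heat-frame proposition, has two parts.
\begin{itemize}
\item First, set $u_{c,s}=\sum_ac_aP_{1-s}\psi_a$. Your function $F$ is nondecreasing in $s$, running from $|\nabla P_1\psi_c(x)|^2$ to $P_1|\nabla\psi_c|^2(x)$. Both endpoints are $|c|^2+o(1)$ as $\delta\to0$, so $P_s|\nabla u_{c,s}|^2(x)=|c|^2+o(1)$ for every $s$. For combinations, the right justification is that the Bakry--\'Emery defect of $\psi_c$ equals $\sum_{a,b}c_ac_bC_{ab}$ for a positive semidefinite matrix $C$ of trace $O(\delta)$. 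Applying the bound $D_1(v,v)\le\tr D_1$ only produces some other near-optimizer for $v$, not $\psi_c$.
\item Second, use the semigroup Poincar\'e inequality $P_s(f^2)-(P_sf)^2\le2sP_s|\nabla f|^2$, valid for $\Ric\ge0$. Applied to $f=|\nabla u_{c,s}|$ together with Kato's inequality, it gives $\operatorname{Var}_{\mu_{s,x}}(|\nabla u_{c,s}|)\le2sP_s|\nabla^2u_{c,s}|^2(x)$. This is small at the Fubini-selected $s$.
\end{itemize}
An elementary second-moment estimate then yields $\int_M\bigl||\nabla u_{c,s}|^2-|c|^2\bigr|\,d\mu_{s,x}\to0$. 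Polarization gives the Gram estimate in $L^1(\mu_{s,x})$. Only at this point does the Li--Yau lower bound transfer it to $B(x,A')$.

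Alternatively, the $L^1$ Bakry--\'Emery bound $|\nabla P_sf|\le P_s|\nabla f|$ gives directly $\operatorname{Var}_{\mu_{s,x}}(|\nabla u_{c,s}|)\le P_1|\nabla\psi_c|^2(x)-|\nabla P_1\psi_c(x)|^2$.

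Once the Gram estimate is in hand, the Dirichlet energy bound needed for harmonic replacement follows from it. The mean value inequality for the subharmonic function $(|\nabla h|^2-1)_+$ then gives $\sup|\nabla h|\le1+o(1)$. So the Harnack-based Lipschitz bound in your Step 3 is not needed.
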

The rigidity theorem is a consequence of the following proposition, which is an almost version of the equality between the number of zero eigenvalues of $D_t$ at a certain point and the number of lines split off from $M$.
\begin{proposition}\label{prop:rank-k-heat-frame}
Let $1\leq k\leq n$, and let $(M_i^n,g_i,x_i)$ be pointed complete
Riemannian manifolds with $\Ric_{g_i}\geq0$.  If
\begin{equation}\label{eq:rank-k-defect-vanishes}
  \vartheta_k(1,x_i)\longrightarrow0,
\end{equation}
 then for every fixed $R\geq1$ there are harmonic functions
$h_i^1,\ldots,h_i^k$ on $B(x_i,4R)$ such that, for every
$1\leq a,b\leq k$,

\begin{align}\label{eq:harmonic-almost-splitting}
  \sup_{B(x_i,R)}&|\nabla h_i^a|\leq1+o_i(1)\\
  \dashint_{B(x_i,R)}
  &\left|\langle\nabla h_i^a,\nabla h_i^b\rangle-\delta^{ab}\right|^2
  \,d\vol_{g_i}\rightarrow0,\\
  \dashint_{B(x_i,R)}&|\nabla^2h_i^a|^2\,d\vol_{g_i}
  \rightarrow0.
\end{align}

Thus $h_i^1,\ldots,h_i^k$ are almost-splitting functions on $B(x_i,R)$.
\end{proposition}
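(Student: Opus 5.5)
From the definition of $\vartheta_k$, I will pick an orthonormal frame $e_1^i,\ldots,e_k^i$ at $x_i$ with $\sum_a D_1(e_a^i,e_a^i)\to 0$. By the variational form \eqref{eq:heat-metric}, for each $a$ there is $\psi_i^a\in C_c^\infty(M_i)$ whose value in the supremum is within $1/i$ of $G_1(e_a^i,e_a^i)$. The algebraic identity from the proof of Proposition~\ref{prop:heat-transport} then gives
\[
|\nabla P_1\psi_i^a(x_i)-e_a^i|^2+\bigl(P_1|\nabla\psi_i^a|^2(x_i)-|\nabla P_1\psi_i^a(x_i)|^2\bigr)\to0 .
\]
Both terms are nonnegative by \eqref{eq:bakry}. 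The gradients $\nabla P_1\psi_i^a(x_i)$ are therefore almost orthonormal, and the Bakry--\'Emery inequality is almost an equality at $(x_i,1)$. Since $\psi_i^a$ need not be bounded, I will first truncate or normalize it, for instance by subtracting $\psi^a_i(x_i)$. Only gradients enter, and $P_1|\nabla\psi|^2(x_i)$ stays bounded.

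The key computation is the monotone quantity $F(s)=P_s(|\nabla P_{1-s}\psi|^2)(x_i)$ for $s\in[0,1]$. Bochner's formula with $\Ric\ge0$ gives
\[
F'(s)=2P_s\bigl(|\nabla^2P_{1-s}\psi|^2+\Ric(\nabla P_{1-s}\psi,\nabla P_{1-s}\psi)\bigr)(x_i)\ge 2P_s(|\nabla^2 P_{1-s}\psi|^2)(x_i),
\]
and $F(1)-F(0)\to0$. Hence $\int_0^1 P_s(|\nabla^2 u_s|^2)(x_i)\,ds\to0$, where $u_s=P_{1-s}\psi$. Applying the same argument to $\psi^a\pm\psi^b$ handles cross terms. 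Because $G_1$ is a quadratic form, almost equality for the basis vectors and for their sums follows from the additivity of near-maximizers; alternatively, the polarization of the defect $P_1|\nabla\psi|^2-|\nabla P_1\psi|^2$ is controlled by Cauchy--Schwarz. On $B(x_i,4R)$ the heat kernel $p_s(x_i,\cdot)$ for $s\in[1/2,1]$ is bounded below by $c(n,R)\,\vol B(x_i,1)^{-1}$ by Li--Yau. Using volume doubling, this gives
\[
\int_{1/2}^{1}\dashint_{B(x_i,4R)}|\nabla^2u_s|^2\to0 .
\]
Similarly, $P_s|\nabla u_s|^2(x_i)\le F(1)$ bounded yields an averaged $L^2$ bound on $|\nabla u_s|$. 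I will pick a good time $s_i\in[1/2,1]$ and set $f_i^a=u^a_{s_i}$.

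The functions $f_i^a$ are caloric rather than harmonic. I will take $h_i^a$ to be the harmonic replacement of $f_i^a$ on $B(x_i,4R)$ with the same boundary values. Then $\Delta f_i^a=\partial_s u$ is small in $L^2$, because $|\Delta f|^2\le n|\nabla^2f|^2$, and the Dirichlet energy of $f-h$ is controlled by Poincar\'e on $\Ric\ge0$ balls. For the gradient inner products, $\langle\nabla f^a,\nabla f^b\rangle$ has small averaged gradient, since $|\nabla\langle\nabla f^a,\nabla f^b\rangle|\le|\nabla^2f^a||\nabla f^b|+\dots$. By Poincar\'e it is then $L^2$-close to a constant, and that constant is identified as $\approx\delta^{ab}$ by the near-equality in $F$. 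Indeed, $F_{ab}(s)$ is almost constant in $s$ and equals $\langle\nabla P_1\psi^a,\nabla P_1\psi^b\rangle(x_i)\approx\delta^{ab}$ at $s=0$, while at $s$ it is a heat average of $\langle\nabla u^a_s,\nabla u^b_s\rangle$.

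The sup bound $|\nabla h|\le1+o(1)$ follows from Cheng--Yau and the Bochner subharmonicity of $|\nabla h|^2$ (mean value inequality), together with $\dashint|\nabla h|^2\approx1$. The Hessian bound for $h$ on $B(x_i,R)$ comes from the standard Cheeger--Colding cutoff integration of Bochner, using $\Delta|\nabla h|^2\ge2|\nabla^2h|^2$ and the $L^1$-smallness of $|\nabla h|^2-1$. I expect the main obstacle to be converting pointwise information at the single point $x_i$, weighted by the heat kernel, into uniform ball averages. This requires lower heat kernel bounds, including uniformly over the time range via Li--Yau. The secondary difficulty is the choice of maximizers $\psi$, which have unbounded support and gradients; I would try to replace $\psi$ by $P_\epsilon\psi$ and use the semigroup property to get regularity.
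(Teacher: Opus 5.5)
\textbf{Gap: identifying the Gram constant after localizing.} Most of your architecture matches the paper's: near-maximizers, the monotone quantity $F(s)=P_s|\nabla P_{1-s}\psi|^2(x_i)$ with positive semidefinite polarization for cross terms, a good time $s_i\in[1/2,1]$, Li--Yau localization, harmonic replacement, and cutoff Bochner. The step that fails is the Gram identification.

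You localize first. Li--Yau gives $\dashint_{B(x_i,4R)}|\nabla^2 f^a|^2\to0$ and $\dashint_{B(x_i,4R)}|\nabla f^a|^2\le C(n,R)$. A ball Poincar\'e inequality then makes $\langle\nabla f^a,\nabla f^b\rangle$ close on the ball to some constant $c^{ab}_i$. This closeness is only in $L^1$, not $L^2$, since $|\nabla f^b|$ is merely $L^2$-bounded.

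You then want $c^{ab}_i\approx\delta^{ab}$ because
\[
F_{ab}(s_i)=\int_M p_{s_i}(x_i,y)\,\langle\nabla f^a,\nabla f^b\rangle(y)\,dy\approx\delta^{ab}.
\]
But this heat average runs over all of $M$, and nothing you have established controls $\int_{M\setminus B(x_i,4R)}p_{s_i}(x_i,\cdot)\,|\nabla f^a|\,|\nabla f^b|$. The only pointwise information is $|\nabla f^a|^2\le P_{1-s_i}|\nabla\psi^a|^2$. Your localized estimates do not prevent $|\nabla\psi^a|^2$ from carrying a fixed fraction of its $p_1(x_i,\cdot)$-mass outside the ball, where the heat kernel is small but nonzero. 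Enlarging the ball does not help, because the obstruction is the uncontrolled integrand, not the heat-kernel mass of the complement.

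From local information you therefore get only the one-sided bound $c^{aa}_i\,\mu_{s_i,x_i}(B(x_i,4R))\le 1+o(1)$. The lower bound $c^{aa}_i\ge 1-o(1)$ and the smallness of $c^{ab}_i$ for $a\ne b$ are exactly what almost splitting needs, and they are not justified.

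\textbf{How the paper avoids this.} It proves concentration with respect to the heat-kernel measure on all of $M$, before localizing. The semigroup Poincar\'e inequality $P_s(f^2)-(P_sf)^2\le 2sP_s|\nabla f|^2$ holds under $\Ric\ge0$. Applied to $f=|\nabla u_{i,c}|$ together with Kato's inequality, it gives $\operatorname{Var}_{\mu_{s_i,x_i}}(|\nabla u_{i,c}|)\le 2s_iP_{s_i}|\nabla^2u_{i,c}|^2(x_i)\to0$. The mean $P_{s_i}|\nabla u_{i,c}|^2(x_i)\to|c|^2$ is taken in the same measure, so this yields $\int_M\bigl||\nabla u_{i,c}|^2-|c|^2\bigr|\,d\mu_{s_i,x_i}\to0$ with no tail issue. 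Polarization and the Li--Yau lower bound then give the $L^1$ ball Gram estimate with the correct constant $\delta^{ab}$. You already have the needed input, the global bound $P_{s_i}|\nabla^2u_{i,c}|^2(x_i)\to0$, so replacing your ball Poincar\'e step by this semigroup Poincar\'e closes the gap.

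\textbf{Sup bound.} Applying the mean value inequality to $|\nabla h|^2$ with $\dashint|\nabla h|^2\approx1$ only gives $\sup|\nabla h|^2\le C(n)(1+o(1))$. To get $1+o(1)$, apply it to the subharmonic function $(|\nabla h|^2-1)_+$, whose average tends to zero once the corrected $L^1$ Gram estimate is in hand.

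Your concern about $\psi_i^a$ being unbounded is moot, since $\psi_i^a\in C_c^\infty$.
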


\begin{proof}
Choose an orthonormal $k$-frame $e_i^1,\ldots,e_i^k$ realizing the
minimum in \eqref{eq:rank-k-heat-defect}.  From the supremum in
\eqref{eq:heat-metric}, choose $\psi_i^a\in C_c^\infty(M_i)$ and set
\[
  U_i^a=\nabla P_1\psi_i^a(x_i),
  \qquad b_i^a=P_1|\nabla\psi_i^a|^2(x_i).
\]
The functions may be chosen so that, with
$\rho_i=\vartheta_k(1,x_i)+i^{-1}$,
\begin{equation}\label{eq:almost-maximizers}
  \sum_{a=1}^k
  \bigl(|U_i^a-e_i^a|^2+b_i^a-|U_i^a|^2\bigr)
  \leq\rho_i.
\end{equation}
Indeed,
\[
  1-\bigl(2\langle U,e\rangle-b\bigr)
  =|U-e|^2+b-|U|^2
\]
for every unit vector $e$, and $b-|U|^2\geq0$ by the
Bakry--\'{E}mery estimate.  In particular,
\begin{equation}\label{eq:terminal-frame}
  \langle U_i^a,U_i^b\rangle\longrightarrow\delta^{ab}.
\end{equation}

For $0\leq s\leq1$, put $u_{i,s}^a=P_{1-s}\psi_i^a$ and define
\begin{equation}\label{eq:covariance}
  C_i(s)_{ab}
  =P_s\langle\nabla u_{i,s}^a,\nabla u_{i,s}^b\rangle(x_i)
   -\langle U_i^a,U_i^b\rangle.
\end{equation}
The Bakry--\'{E}mery estimate gives $C_i(s)\geq0$, while the polarized
Bochner formula gives
\begin{equation}\label{eq:covariance-interpolation}
  [C_i(\beta)-C_i(\alpha)]_{ab}
  =2\int_\alpha^\beta P_s\!\left(
    \langle\nabla^2u_{i,s}^a,\nabla^2u_{i,s}^b\rangle
    +\Ric(\nabla u_{i,s}^a,\nabla u_{i,s}^b)
  \right)(x_i)\,ds.
\end{equation}
Since $C_i(0)=0$, equations \eqref{eq:almost-maximizers} and
\eqref{eq:covariance} give
\begin{equation}\label{eq:terminal-covariance}
  0\leq\tr C_i(1)
  =\sum_{a=1}^k(b_i^a-|U_i^a|^2)
  \leq\rho_i.
\end{equation}
Taking the trace in \eqref{eq:covariance-interpolation} therefore yields
\[
  2\int_{1/2}^1\sum_{a=1}^kP_s\!\left(
    |\nabla^2u_{i,s}^a|^2
    +\Ric(\nabla u_{i,s}^a,\nabla u_{i,s}^a)
  \right)(x_i)\,ds
  \leq\tr C_i(1).
\]
Thus one can choose $s_i\in(1/2,1)$ such that, for
$u_i^a=u_{i,s_i}^a$,
\begin{equation}\label{eq:hessian-heat-small}
  \sum_{a=1}^kP_{s_i}\!\left(
    |\nabla^2u_i^a|^2
    +\Ric(\nabla u_i^a,\nabla u_i^a)
  \right)(x_i)
  \leq\tr C_i(1)\leq\rho_i.
\end{equation}
This explains the common choice of time in \eqref{eq:hessian-heat-small}.
Moreover, $0\leq C_i(s_i)\leq C_i(1)$, so
$\|C_i(s_i)\|\leq\tr C_i(1)$.  Combining this with
\eqref{eq:covariance} and \eqref{eq:terminal-frame} gives
\begin{equation}\label{eq:heat-frame-pointwise}
  P_{s_i}\langle\nabla u_i^a,\nabla u_i^b\rangle(x_i)
  \longrightarrow\delta^{ab}.
\end{equation}

We next pass from \eqref{eq:heat-frame-pointwise} to an averaged Gram
estimate.  For $c\in\mathbb R^k$, write
\[
  u_{i,c}=\sum_ac_au_i^a,
  \quad A_{i,c}=P_{s_i}|\nabla u_{i,c}|^2(x_i),
  \quad H_{i,c}=P_{s_i}|\nabla^2u_{i,c}|^2(x_i).
\]
Equations \eqref{eq:hessian-heat-small} and
\eqref{eq:heat-frame-pointwise} imply
$H_{i,c}\to0$ and $A_{i,c}\to|c|^2$.  Kato's inequality and the
semigroup Poincar\'{e} inequality give
\begin{equation}\label{eq:heat-poincare}
  \operatorname{Var}_{\mu_{s_i,x_i}}(|\nabla u_{i,c}|)
  \leq2s_iH_{i,c}.
\end{equation}
  One applies the inequality first
to $(|\nabla u_{i,c}|^2+\epsilon^2)^{1/2}$ and then lets
$\epsilon\downarrow0$.  If $X\geq0$, $A=E(X^2)$, and
$\kappa=\operatorname{Var}(X)$, then
\begin{align*}
  E|X^2-A|
  &\leq E|X^2-(EX)^2|+\bigl|A-(EX)^2\bigr|\\
  &\leq
  \bigl(E|X-EX|^2\bigr)^{1/2}
  \bigl(E|X+EX|^2\bigr)^{1/2}+\kappa\\
  &\leq2\sqrt{A\kappa}+\kappa.
\end{align*}
Indeed, $A-(EX)^2=\kappa$ and
$E|X+EX|^2=A+3(EX)^2\leq4A$.  Applying this with
$X=|\nabla u_{i,c}|$ and
$\kappa_{i,c}=\operatorname{Var}_{\mu_{s_i,x_i}}
(|\nabla u_{i,c}|)$ gives
\begin{align*}
  &\int_M\bigl||\nabla u_{i,c}|^2-|c|^2\bigr|\,
  d\mu_{s_i,x_i}\\
  &\quad\leq
  \int_M\bigl||\nabla u_{i,c}|^2-A_{i,c}\bigr|\,
  d\mu_{s_i,x_i}
  +|A_{i,c}-|c|^2|\\
  &\quad\leq
  2\sqrt{A_{i,c}\kappa_{i,c}}+\kappa_{i,c}
  +|A_{i,c}-|c|^2|
  \longrightarrow0.
\end{align*}
For $a=b$, take $c=e_a$.  For $a\ne b$, take $c=e_a+e_b$ and use
\[
  2\langle\nabla u_i^a,\nabla u_i^b\rangle
  =|\nabla(u_i^a+u_i^b)|^2-|\nabla u_i^a|^2-|\nabla u_i^b|^2,
\]
which yields
\begin{align*}
  2\int_M\left|
  \langle\nabla u_i^a,\nabla u_i^b\rangle
  \right|\,d\mu_{s_i,x_i} \leq\int_M\bigl||\nabla(u_i^a+u_i^b)|^2-2\bigr| +\bigl||\nabla u_i^a|^2-1\bigr|  +\bigl||\nabla u_i^b|^2-1\bigr|
     \,d\mu_{s_i,x_i}
  \rightarrow0.
\end{align*}
Thus, 
\begin{equation}\label{eq:gram-heat-l1}
  \int_M\left|
  \langle\nabla u_i^a,\nabla u_i^b\rangle-\delta^{ab}
  \right|\,d\mu_{s_i,x_i}\longrightarrow0.
\end{equation}

For fixed $R\geq1$, the Gaussian lower estimate for the heat kernel,
uniformly for $s_i\in[1/2,1]$, gives
\begin{equation}\label{eq:heat-lower}
  p_{s_i}(x_i,y)
  \geq\frac{c(n,R)}{\vol_{g_i} B(x_i,\sqrt{s_i})},
  \qquad y\in B(x_i,4R)
\end{equation}
\cite{LiYau1986ParabolicKernel}.  Since
$\vol_{g_i} B(x_i,\sqrt{s_i})\leq\vol_{g_i} B(x_i,4R)$,
\eqref{eq:hessian-heat-small} and \eqref{eq:gram-heat-l1} imply
\begin{align}
  \dashint_{B(x_i,4R)}
  \left|\langle\nabla u_i^a,\nabla u_i^b\rangle-\delta^{ab}\right|
  \,d\vol_{g_i}&\longrightarrow0,
  \label{eq:local-gram}\\
  \dashint_{B(x_i,4R)}|\nabla^2u_i^a|^2\,d\vol_{g_i}
  &\longrightarrow0.
  \label{eq:local-hessian}
\end{align}

Let $h_i^a$ be the harmonic replacement of $u_i^a$ in $B(x_i,4R)$.
Then $w_i^a=h_i^a-u_i^a\in W^{1,2}_0(B(x_i,4R))$.  Testing the
replacement equation with $w_i^a$, using the Dirichlet Poincar\'{e}
inequality and $|\Delta u_i^a|^2\leq n|\nabla^2u_i^a|^2$, gives
\begin{equation}\label{eq:harmonic-replacement}
  \dashint_{B(x_i,4R)}|\nabla w_i^a|^2\,d\vol_{g_i}
  \leq C(n)R^2
  \dashint_{B(x_i,4R)}|\nabla^2u_i^a|^2\,d\vol_{g_i}
  \longrightarrow0.
\end{equation}
Equations \eqref{eq:local-gram} and \eqref{eq:harmonic-replacement} give, for every $a,b$,
\begin{equation}\label{eq:harmonic-gram-l1}
  \dashint_{B(x_i,3R)}
  \left|\langle\nabla h_i^a,\nabla h_i^b\rangle-\delta^{ab}\right|
  \,d\vol_{g_i}\longrightarrow0.
\end{equation}
Indeed,
\[
  \left|\langle\nabla h_i^a,\nabla h_i^b\rangle
  -\langle\nabla u_i^a,\nabla u_i^b\rangle\right|
  \leq|\nabla w_i^a|\,|\nabla h_i^b|
      +|\nabla u_i^a|\,|\nabla w_i^b|,
\]
and the integral of the right side over $B(x_i,3R)$ tends to zero by
Cauchy--Schwarz, \eqref{eq:harmonic-replacement}, and the uniform Dirichlet energy
bounds on $B(x_i,4R)$.

For each $a$, the function $(|\nabla h_i^a|^2-1)_+$ is subharmonic.
The mean-value inequality and Bishop--Gromov comparison therefore give
\[
  \sup_{B(x_i,2R)}(|\nabla h_i^a|^2-1)_+
  \leq C(n,R)\dashint_{B(x_i,3R)}
  (|\nabla h_i^a|^2-1)_+\,d\vol_{g_i}
  \longrightarrow0.
\]
Together with \eqref{eq:harmonic-gram-l1}, this implies, componentwise,
\begin{equation}\label{eq:harmonic-gram-l2}
  \dashint_{B(x_i,2R)}
  \left|\langle\nabla h_i^a,\nabla h_i^b\rangle-\delta^{ab}\right|^2
  \,d\vol_{g_i}\longrightarrow0.
\end{equation}

Finally, choose a Laplacian cutoff $\chi_i$ supported in $B(x_i,2R)$,
equal to one on $B(x_i,R)$, and satisfying
$|\Delta\chi_i|\leq C(n)R^{-2}$.  The Bochner formula and
$\Ric_{g_i}\geq0$ give
\[
  2\int_{B(x_i,2R)}\chi_i|\nabla^2h_i^a|^2\,d\vol_{g_i}
  \leq\int_{B(x_i,2R)}
  (|\nabla h_i^a|^2-1)\Delta\chi_i\,d\vol_{g_i}
  =o_i(1)\vol_{g_i} B(x_i,R).
\]
This proves \eqref{eq:harmonic-almost-splitting}.  In particular, the harmonic
replacements are almost splitting maps.
\end{proof}

\begin{proof}[Proof of Theorem~\ref{thm:maximal-trace-almost-rigidity}]
After rescaling by $t^{-1}$, suppose that no such $\delta$ exists.  Then
there are pointed complete manifolds $(M_i^n,g_i,x_i)$ with
$\Ric_{g_i}\geq0$ and
\[
  \tr_{g_i}D_1(x_i)\longrightarrow0,
\]
while at least one of \eqref{eq:almost-rigidity-gh} and
\eqref{eq:almost-rigidity-volume}, with $t=1$, fails for every $i$.
Since $\vartheta_n(1,x_i)=\tr D_1(x_i)$,
Proposition~\ref{prop:rank-k-heat-frame} with $k=n$ produces harmonic splitting maps on every fixed ball. The almost-splitting to GH closeness theorem \cite{CheegerColding96} therefore gives
\begin{equation}\label{eq:local-almost-splitting}
  d_{GH}\!\left(B(x_i,A),B_{\mathbb R^n}(0,A)\right)
  \longrightarrow0.
\end{equation}
Colding's volume-convergence theorem \cite{Colding97} then gives,
\[
  \vol_{g_i} B(x_i,A)\longrightarrow\omega_nA^n.
\]
The two convergences contradict the choice of the
sequence. Scaling back by \eqref{eq:heat-metric-scaling} proves the theorem.
\end{proof}

\begin{remark}\label{rem:exact-maximal-trace}
If $\tr_gG_t(x)=n$ at one point and one positive time, then $(M,g)$ is
isometric to $\mathbb R^n$.  Indeed, let
$\eta_{GH},\eta_{\mathrm{vol}}\downarrow0$
and $A\uparrow\infty$ in
Theorem~\ref{thm:maximal-trace-almost-rigidity}.  More generally, the same argument shows that $\vartheta_k(t,x)=0$ produces splitting $\R^k$.
\end{remark}

\begin{remark}
    The time parameter $t$ plays no role under nonngenative Ricci curvature. However, if one allows negative Ricci curvature lower bound $K$, the same almost rigidity for positive eigenvalue holds at scale $\sqrt{t}\ll K^{-1}$. 
\end{remark}

We now recover the only consequence of maximal-trace almost rigidity needed
for the volume-growth theorem.

\begin{proposition}\label{c1:prop:trace-gap}
For every $n\geq2$ and $\epsilon>0$ for which the class below is nonempty,
there is $\delta=\delta(n,\epsilon)>0$ with the following property.  If
$(M^n,g)$ is complete with $\Ric_g\geq0$ and
\begin{equation}\label{eq:unit-volume-gap}
  \vol_gB_g(y,1)\leq\omega_n-\epsilon
  \qquad\text{for every }y\in M,
\end{equation}
then
\begin{equation}\label{eq:trace-gap}
  \tr D_{1/2}(x)\geq\delta
\end{equation}
for every $x\in M$.
\end{proposition}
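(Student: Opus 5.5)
We argue by contradiction, reducing to Theorem~\ref{thm:maximal-trace-almost-rigidity} at time $t=1/2$. Suppose no such $\delta$ exists. Then there are complete manifolds $(M_i^n,g_i)$ with $\Ric_{g_i}\geq0$ satisfying \eqref{eq:unit-volume-gap} with the same $\epsilon$, and points $x_i\in M_i$ with $\tr D_{1/2}(x_i)\to0$. Equivalently,
\[
\tr_{g_i}G_{1/2}(x_i)\geq n-\delta_i,\qquad \delta_i\to0 .
\]

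We apply Theorem~\ref{thm:maximal-trace-almost-rigidity} with $t=1/2$ and choose $A=\sqrt2$, so that $A\sqrt t=1$ and the ball in \eqref{eq:almost-rigidity-volume} is exactly the unit ball. We take $\eta_{\mathrm{vol}}=\epsilon/4$ and $\eta_{GH}=1$; the value of $\eta_{GH}$ is irrelevant here. The theorem supplies $\delta_*=\delta(n,\sqrt2,1,\epsilon/4)$. For $i$ large we have $\delta_i<\delta_*$, and \eqref{eq:almost-rigidity-volume} then gives
\[
\bigl|2^{n/2}\vol_{g_i}B(x_i,1)-\omega_n 2^{n/2}\bigr|<\epsilon/4,
\qquad\text{hence}\qquad
\vol_{g_i}B(x_i,1)>\omega_n-2^{-n/2}\epsilon/4>\omega_n-\epsilon .
\]
This contradicts \eqref{eq:unit-volume-gap} at $y=x_i$. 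The contradiction gives a $\delta$ depending only on $n$ and $\epsilon$.

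A contradiction sequence is not really needed: one can set $\delta(n,\epsilon)=\delta(n,\sqrt2,1,\epsilon/4)$ from Theorem~\ref{thm:maximal-trace-almost-rigidity} directly. If $\tr D_{1/2}(x)<\delta$ at some $x$, the theorem puts the unit-ball volume at $x$ within $2^{-n/2}\epsilon/4$ of $\omega_n$, which \eqref{eq:unit-volume-gap} forbids.

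The only points to check are bookkeeping. First, the scaling normalization in \eqref{eq:almost-rigidity-volume}: the factor $t^{-n/2}=2^{n/2}$ turns the error $\epsilon/4$ into $2^{-n/2}\epsilon/4\le\epsilon$, so the choice of $\eta_{\mathrm{vol}}$ has room. Second, $\tr_gG_t=n-\tr D_t$, which is how the hypothesis of the theorem is met. The hypothesis that the class is nonempty plays no role beyond making the statement meaningful. All the difficulty has already been absorbed into Proposition~\ref{prop:rank-k-heat-frame} and Colding's volume convergence.
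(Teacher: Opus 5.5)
Your proposal is correct and follows the paper's own argument: it applies Theorem~\ref{thm:maximal-trace-almost-rigidity} at $t=1/2$ with $A=\sqrt2$, so that the ball is the unit ball, and its volume conclusion contradicts the unit-ball deficit (your scaling bookkeeping with $t^{-n/2}=2^{n/2}$ is right). Your remark that no contradiction sequence is needed, since one can take $\delta(n,\epsilon)=\delta(n,\sqrt2,1,\epsilon/4)$ directly, only streamlines the same argument.
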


\begin{proof}
Otherwise there are $(M_i,g_i,x_i)$ satisfying
\eqref{eq:unit-volume-gap} with $\tr D_{1/2}(x_i)\to0$.  Apply
Theorem~\ref{thm:maximal-trace-almost-rigidity} with $t=1/2$ and
$A=\sqrt2$.  Its volume conclusion gives
$\vol_{g_i} B(x_i,1)\to\omega_n$, contradicting
\eqref{eq:unit-volume-gap}.
\end{proof}

\begin{remark}\label{rem:delta-non-effective}
Equivalently, if $\mathcal C_{n,\epsilon}$ denotes the nonempty class of
pointed triples in Proposition~\ref{c1:prop:trace-gap}, then
\[
  \delta(n,\epsilon)
  =\inf_{(M,g,x)\in\mathcal C_{n,\epsilon}}\tr D_{1/2}(x)>0.
\]
\end{remark}
 \section{The codimension-two gap}
\label{c2:sec:heat-gap}

Order the eigenvalues of \(D_t(x)\), relative to \(g_x\), as
\begin{equation}
 d_1(t,x)\geq d_2(t,x)\geq\cdots\geq d_n(t,x)\geq0,
 \label{c2:eq:eigenvalues}
\end{equation}
and write
\begin{equation}
 q_t(x)=\tr D_t(x)-d_1(t,x)=\sum_{a=2}^nd_a(t,x).
 \label{c2:eq:heat-tail}
\end{equation}
Thus \(q_t\) is the deficit left after removing the largest eigenvalue.

\begin{proposition}
\label{c2:prop:heat-gap}
There is a constant \(\delta_n>0\), depending only on \(n\), such that
the following holds.
Let \((M^n,g)\), be a complete noncompact Riemannian $n$-manifold, $n\ge 3$. Suppose
\begin{equation}
 \Ric_g\geq0,
 \qquad \Scal_g\geq\sigma>0.
 \label{c2:eq:heat-gap-hypotheses}
\end{equation}
 Then, at the curvature time
\(t=\sigma^{-1}\),
\begin{equation}
 q_t(x)\geq\delta_n,
 \qquad
 d_2(t,x)\geq\rho_n:=\frac{\delta_n}{n-1}
 \quad\text{for every }x\in M.
 \label{c2:eq:heat-gap}
\end{equation}
\end{proposition}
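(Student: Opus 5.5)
We argue by contradiction, after rescaling. By \eqref{eq:heat-metric-scaling}, replacing $g$ by $\sigma g$ makes $\Scal\geq1$, $t=1$, and leaves $\tr D$, $d_1$ and $q$ invariant. So it suffices to treat $\sigma=1$, $t=1$. Note first that $q_1=\vartheta_{n-1}(1,x)$: the sum of all eigenvalues except the largest is exactly the sum of the $n-1$ smallest. The second claim then follows from the first, because $d_2$ is the largest among $d_2,\dots,d_n$, so $d_2\geq q_1/(n-1)$. It therefore remains to find $\delta_n$ with $\vartheta_{n-1}(1,x)\geq\delta_n$.

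Suppose not. Then there are pointed complete manifolds $(M_i^n,g_i,x_i)$ with $\Ric\geq0$, $\Scal\geq1$ and $\vartheta_{n-1}(1,x_i)\to0$. Proposition~\ref{prop:rank-k-heat-frame} with $k=n-1$ gives, for every fixed $R$, harmonic maps $h_i=(h_i^1,\dots,h_i^{n-1})$ on $B(x_i,4R)$ that are $(n-1)$-almost splitting on $B(x_i,R)$. They satisfy $\sup|\nabla h_i^a|\leq1+o(1)$, the averaged Gram matrix tends to $\delta^{ab}$ in $L^2$, and the averaged Hessians tend to zero in $L^2$. We must show this is incompatible with $\Scal\geq1$ at unit scale, with no volume noncollapsing assumption.

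The main obstacle is this last step, and the key input is Lemma~\ref{c2:lem:local-splitting}. The heuristic is as follows. If $n-1$ directions almost split, the remaining geometry is essentially one-dimensional. In the smooth picture, $\Ric\geq0$ together with the nearly parallel frame $\nabla h^a$ forces $\Ric(\nabla h^a,\nabla h^a)$ to be small in an averaged sense; this is also provided directly by the Ricci term in \eqref{eq:hessian-heat-small}. With $n-1$ nearly orthonormal directions $e_a$ we get
\[
\Scal=\sum_a\Ric(e_a,e_a)+\Ric(\nu,\nu)\leq 2\sum_a\Ric(e_a,e_a)+\dots
\]
More precisely, $\Scal=2\sum_{a<b}\sec(e_a,e_b)+2\sum_a\sec(e_a,\nu)$, and each term is controlled by the Ricci curvature in the $e_a$ directions. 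This is because $\Ric\geq0$ gives $\Ric(\nu,\nu)\leq\sum_a\Ric(e_a,e_a)$ whenever exactly one complementary direction remains: $\Ric(\nu,\nu)=\sum_a\sec(\nu,e_a)\leq\sum_a\Ric(e_a,e_a)$, since the $2$-plane sectional sums are dominated by the Ricci sum. Hence
\[
\Scal\leq 2\sum_{a=1}^{n-1}\Ric(\nabla h^a,\nabla h^a)+\text{errors}
\]
wherever the frame is nearly orthonormal. Integrating against $\mu_{s_i,x_i}$ and using \eqref{eq:hessian-heat-small}, where $P_{s_i}\Ric(\nabla u^a,\nabla u^a)(x_i)\leq\rho_i$, gives $P_{s_i}\Scal(x_i)\lesssim\rho_i+o(1)$. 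This contradicts $\Scal\geq1$, since the heat measure has total mass one.

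The delicate points are these. First, the errors come from regions where the Gram matrix of $\nabla u_i^a$ is far from the identity; these are controlled in $L^1(\mu_{s_i,x_i})$ by \eqref{eq:gram-heat-l1}. Second, one has to estimate $\Ric(\nu,\nu)$ on those regions, where $\Ric$ is unbounded; this is where collapsing could be a problem. I would handle it by working pointwise with the matrix inequality $\Ric\leq\Scal\cdot g$ (valid since $\Ric\geq0$) and a truncation. On the good set, where the Gram matrix is close to $\delta^{ab}$, the pointwise algebra above gives $\Scal\leq C(n)\sum_a\Ric(\nabla u^a,\nabla u^a)$. Choosing a cutoff level $\tau$ for the Gram error, the good set has heat-measure at least $1-o(1)$, so
\[
\mu_{s_i,x_i}(\text{good})\leq\int_{\text{good}}\Scal\,d\mu_{s_i,x_i}\leq C(n)\rho_i\to0,
\]
a contradiction. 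This avoids integrating $\Scal$ on the bad set entirely. Note that $\Scal\geq1$ is only used as a lower bound there, which is exactly why no noncollapsing hypothesis is needed. The resulting $\delta_n$ depends only on $n$, because the limiting argument never referenced the particular manifold.
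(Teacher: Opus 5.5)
Your reductions match the paper: rescaling to $\sigma=t=1$, the identity $q_1=\vartheta_{n-1}(1,x)$, the bound $d_2\geq q_1/(n-1)$, and applying Proposition~\ref{prop:rank-k-heat-frame} with $k=n-1$ along a contradicting sequence. The gap is in the step that produces the contradiction. You name Lemma~\ref{c2:lem:local-splitting} as the key input but never apply it. The argument you actually give rests on a pointwise claim: for an orthonormal frame $e_1,\dots,e_{n-1},\nu$ one has $\Ric(\nu,\nu)\leq\sum_a\Ric(e_a,e_a)$, and hence $\Scal\leq C(n)\sum_a\Ric(\nabla u^a,\nabla u^a)$ wherever the Gram matrix is close to the identity. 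This is false under $\Ric\geq0$. In fact $\sum_a\Ric(e_a,e_a)-\Ric(\nu,\nu)=2\sum_{a<b}\sec(e_a,e_b)$, and these sectional curvatures may be negative. Take the Berger metric in the remark after Lemma~\ref{c2:lem:local-splitting}, with $\sec(X_1,X_2)=-1$ and $\sec(X_1,X_3)=\sec(X_2,X_3)=1$. It has $\Ric(X_1,X_1)=\Ric(X_2,X_2)=0$ while $\Scal\equiv2$. Taking its product with $\R^{n-3}$ gives, in every dimension, $\Ric\geq0$ and $\Scal\equiv2$, with an orthonormal $(n-1)$-frame at every point on which $\sum_a\Ric(e_a,e_a)=0$. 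So smallness of the Ricci part of \eqref{eq:hessian-heat-small} gives no control on $\Scal$: the positivity of Ricci can sit entirely in the single normal direction. This is exactly the scenario that forced a noncollapsing hypothesis in earlier work. Your closing remark that no noncollapsing is needed is therefore precisely where the argument fails.

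The missing idea is to use the Hessian smallness, which carries no pointwise curvature information, through an integration by parts that controls $\Ric$ in the normal direction. In the paper, the argument runs as follows.
\begin{enumerate}
\item The closed form $\beta=dh^1\wedge\cdots\wedge dh^{n-1}$ is almost parallel.
\item Its Weitzenb\"ock curvature term equals $\Ric(N,N)$ with $N=(\star\beta)^\sharp$.
\item The integrated Weitzenb\"ock identity with a cutoff bounds the average of $\Ric(N,N)$ by $C_n/A^2$ plus Hessian terms.
\item The identity $\det(H)\Scal=\Ric(N,N)+\sum_{a,b}\operatorname{cof}(H)_{ab}\Ric(U_a,U_b)$ then contradicts $\Scal\geq1$ once $A$ is large.
\end{enumerate}
Because the cutoff error is $C_n/A^2$, this is carried out on a ball of large radius $8A_n$, not at the unit heat scale of $\mu_{s_i,x_i}$. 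The fix is to drop the pointwise heat-measure argument. Instead, apply Proposition~\ref{prop:rank-k-heat-frame} with $R=8A_n$ and then Lemma~\ref{c2:lem:local-splitting}; that is exactly the paper's proof.
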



The next Lemma is the new observation that allows one to show that uniformly positive scalar curvature prevents $(n-1)$-almost splitting. This was considered by Wang--Xie--B. Zhu--X. Zhu \cite{WangXieZhuZhu24}*{Remark 3.2}, and by Cucinotta--Mondino
\cite{CucinottaMondino26}*{Theorem~3.8(1)}  for uniformly positive integral scalar curvature lower bound. Both previous works require $M$ to be volume noncollapsed, to exclude the case when the positivity of scalar curvature is provided by only one direction in the Ricci tensor. The key observation is that the smallness of the Hessian can prevent the concentration of the positivity of Ricci tensor in one direction.
\begin{lemma}
\label{c2:lem:local-splitting}
There are \(A_n>1\) and \(\varepsilon_n>0\) such that the following
 holds. Let \((M^n,g,x)\) be a complete Riemannian manifold with
\(\Ric_g\geq0\) and \(\Scal_g\geq1\). Then there do not exist harmonic $\eps_n$-almost splitting functions \(h^1,\ldots,h^{n-1}\) in $(B(x,8A_n))$
satisfying, for every \(1\leq a,b\leq n-1\),
\begin{align} 
  \sup_{B(x,8A_n)}|\nabla h^a|
 &\leq1+\varepsilon_n.\label{c2:eq:harmonic-gradient}\\
 \frac1{\vol_g B(x,8A_n)}
 \int_{B(x,8A_n)}
 \left|\ip{\nabla h^a}{\nabla h^b}-\delta^{ab}\right|^2
 \dd\vol_g
&<\varepsilon_n, \label{c2:eq:local-Gram}\\
 \frac1{\vol_g B(x,8A_n)}
 \int_{B(x,8A_n)}
 |\nabla^2 h^a|^2\dd\vol_g
 &<\varepsilon_n, \label{c2:eq:local-Hess}
\end{align}
\end{lemma}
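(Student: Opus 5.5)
The plan is to show that, under \eqref{c2:eq:harmonic-gradient}--\eqref{c2:eq:local-Hess}, the integral of $\Ric$ against a cutoff is small in \emph{every} direction of an almost orthonormal frame. Summing over the frame then makes the averaged scalar curvature small, which contradicts $\Scal_g\geq1$ once $\varepsilon_n$ is small. The $n-1$ directions $\nabla h^a$ are handled by the Bochner formula. The remaining normal direction, where the positivity of scalar curvature could a priori concentrate, is handled by a Weitzenböck argument for the Hodge dual of $dh^1\wedge\cdots\wedge dh^{n-1}$. This is where smallness of the Hessian, rather than noncollapsing, does the work.

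\emph{Step 1 (cutoff).} I take a Cheeger--Colding cutoff $\phi$ supported in $B(x,8A_n)$, with $\phi\equiv1$ on $B(x,4A_n)$ and $|\nabla\phi|+|\Delta\phi|\leq C(n)$. Here $A_n$ can simply be a fixed number such as $A_n=1$; the curvature normalization $\Scal_g\geq1$ fixes the scale. Bishop--Gromov gives $\vol B(x,4A_n)\geq c(n)\vol B(x,8A_n)$, so averages over either ball are comparable.

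\emph{Step 2 (tangential directions).} For each $a$, Bochner for the harmonic $h^a$ gives
\[
\int\phi\,\Ric(\nabla h^a,\nabla h^a)\leq\tfrac12\int(|\nabla h^a|^2-1)\Delta\phi .
\]
By \eqref{c2:eq:local-Gram} and Cauchy--Schwarz, the right side is at most $C(n)\sqrt{\varepsilon_n}\,\vol B(x,8A_n)$.

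\emph{Step 3 (normal direction, the main obstacle).} Set $\omega=*(dh^1\wedge\cdots\wedge dh^{n-1})$, a smooth $1$-form. I will use three facts about $\omega$:
\begin{itemize}
\item[(i)] $\delta\omega=\pm *d(dh^1\wedge\cdots\wedge dh^{n-1})=0$;
\item[(ii)] $|\nabla\omega|+|d\omega|\leq C(n)\sum_a|\nabla^2h^a|\prod_{b\ne a}|\nabla h^b|\leq C(n)\sum_a|\nabla^2 h^a|$, using \eqref{c2:eq:harmonic-gradient};
\item[(iii)] $|\omega|^2=\det\bigl(\ip{\nabla h^a}{\nabla h^b}\bigr)$, so $\dashint\bigl||\omega|^2-1\bigr|\leq C(n)\sqrt{\varepsilon_n}$ by \eqref{c2:eq:local-Gram} and the gradient bound.
\end{itemize}
The Weitzenböck formula $\tfrac12\Delta|\omega|^2=|\nabla\omega|^2+\Ric(\omega,\omega)-\ip{\Delta_H\omega}{\omega}$ is integrated against $\phi$. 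Because $\delta\omega=0$, the term $\int\phi\ip{\Delta_H\omega}{\omega}=\int\phi\ip{\delta d\omega}{\omega}$ integrates by parts to
\[
\int\phi|d\omega|^2+\int\ip{d\omega}{d\phi\wedge\omega}.
\]
Its modulus is bounded by $C\int|\nabla^2h|^2+C\int|\nabla^2h|$, which by \eqref{c2:eq:local-Hess} is at most $C\sqrt{\varepsilon_n}\vol B(x,8A_n)$. Together with (iii) this yields
\[
\int\phi\,\Ric(\omega,\omega)\leq C(n)\sqrt{\varepsilon_n}\,\vol B(x,8A_n).
\]
The delicate points are the sign conventions and the pointwise bound (ii). The key mechanism is that $\Delta_H\omega$ is controlled only through $d\omega$, which is quadratic in Hessians after integration by parts.

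\emph{Step 4 (conclusion).} Let $G\subset B(x,4A_n)$ be the set where the Gram matrix of $\nabla h^1,\ldots,\nabla h^{n-1}$ is within $\varepsilon_n^{1/4}$ of the identity. By Chebyshev and \eqref{c2:eq:local-Gram}, $\vol(B(x,4A_n)\setminus G)\leq C\varepsilon_n^{1/2}\vol B(x,8A_n)$. On $G$, the vectors $\nabla h^1,\ldots,\nabla h^{n-1},\omega^\sharp$ form a frame that is $C\varepsilon_n^{1/4}$-close to orthonormal, since $\omega^\sharp$ is orthogonal to every $\nabla h^a$. Because $\Ric\geq0$, this gives pointwise
\[
\Scal\leq(1+C\varepsilon_n^{1/4})\Bigl(\sum_a\Ric(\nabla h^a,\nabla h^a)+\Ric(\omega,\omega)\Bigr)\quad\text{on }G.
\]
Using $\Scal\geq1$ and $\Ric\geq0$ off $G$, I get
\[
\vol G\leq\int_G\phi\,\Scal\leq C(n)\sqrt{\varepsilon_n}\,\vol B(x,8A_n).
\]
On the other hand, Step 1 gives $\vol G\geq(c(n)-C\varepsilon_n^{1/2})\vol B(x,8A_n)$, which is a contradiction once $\varepsilon_n$ is small enough depending only on $n$.
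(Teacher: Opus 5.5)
Your argument is correct and uses the same mechanism as the paper. Bochner for the $h^a$ controls $\Ric$ in the $n-1$ tangential directions, the Weitzenb\"ock formula for the closed form $dh^1\wedge\cdots\wedge dh^{n-1}$ (equivalently, your co-closed dual $1$-form $\omega$) controls $\Ric$ in the normal direction, and summing over the frame recovers $\Scal\geq1$. The differences are bookkeeping only: the paper takes $A_n$ large so that the Bochner and Weitzenb\"ock errors are $O(A^{-2})$ using only \eqref{c2:eq:harmonic-gradient} (so \eqref{c2:eq:local-Hess} is not needed there), and it handles degenerate Gram matrices via the cofactor identity $\det(H)\Scal=\Ric(N,N)+\sum_{a,b}\operatorname{cof}(H)_{ab}\Ric(U_a,U_b)$, whereas you work at a fixed scale (take any fixed $A_n>1$, as the statement requires) with $O(\sqrt{\varepsilon_n})$ errors from \eqref{c2:eq:local-Gram}--\eqref{c2:eq:local-Hess} and a Chebyshev good set.
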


\begin{proof}
We first carry out the argument with a parameter \(A>1\), and choose
\(A=A_n\) at the end. A cutoff Bochner argument on
\(B(x,3A)\subset B(x,4A)\), using harmonicity and
\eqref{c2:eq:harmonic-gradient}, yields
\begin{equation}
 \int_{B(x,3A)}\sum_{a=1}^{n-1}
 \left(|\nabla^2 h^a|^2+\Ric(\nabla h^a,\nabla h^a)\right)\dd\vol_g
 \leq\frac{C_n}{A^2}\vol_g B(x,A).
 \label{c2:eq:local-Bochner}
\end{equation}

Put \(U_a=\nabla h^a\), let
\(H_{ab}=\ip{U_a}{U_b}\), and consider the ordinary
\((n-1)\)-form
\begin{equation}
 \beta=dh^1\wedge\cdots\wedge dh^{n-1}.
 \label{c2:eq:local-form}
\end{equation}
This definition is orientation-free and local.  Moreover, \(d\beta=0\),
\(|\beta|\leq C_n\), and
\begin{equation}
 |\nabla\beta|^2\leq C_n\sum_{a=1}^{n-1}|\nabla^2 h^a|^2
 \quad\text{on }B(x,4A)
 \label{c2:eq:form-energy}
\end{equation}
by \eqref{c2:eq:harmonic-gradient}.

Write \(\Delta_H=d\delta+\delta d\), let \(\nabla^*\nabla\) be the
nonnegative connection Laplacian, and let \(\mathcal R_{n-1}\) be the
curvature endomorphism in the Weitzenbock formula
\[
 \Delta_H=\nabla^*\nabla+\mathcal R_{n-1}
 \quad\text{on }(n-1)\text{-forms}.
\]
Choose \(\eta=1\) on \(B(x,2A)\), supported in \(B(x,3A)\), with
\(|\nabla\eta|\leq C/A\).  Apply the integrated Weitzenbock identity
to the compactly supported form \(\eta\beta\).  Since \(d\beta=0\),
\[
 d(\eta\beta)=d\eta\wedge\beta,
 \qquad
 \delta(\eta\beta)=\eta\,\delta\beta-\iota_{\nabla\eta}\beta,
 \qquad
 |\delta\beta|^2\leq n|\nabla\beta|^2.
\]
Hence the integrated Weitzenbock identity, \eqref{c2:eq:form-energy},
\eqref{c2:eq:local-Bochner}, and Bishop--Gromov comparison give
\begin{align}
 \int_M\eta^2
 \ip{\mathcal R_{n-1}\beta}{\beta}\dd\vol_g
 &=\int_M\left(
 |d(\eta\beta)|^2+|\delta(\eta\beta)|^2
 -|\nabla(\eta\beta)|^2\right)\dd\vol_g\notag\\
 &\leq C_n\int_{B(x,3A)}
 \left(|\nabla\eta|^2|\beta|^2
       +\eta^2|\nabla\beta|^2\right)\dd\vol_g\notag\\
 &\leq \frac{C_n}{A^2}\vol_g B(x,3A)
 +C_n\int_{B(x,3A)}\sum_{a=1}^{n-1}|\nabla^2 h^a|^2\dd\vol_g\notag\\
 &\leq\frac{C_n}{A^2}\vol_g B(x,A).
 \label{c2:eq:local-Hodge}
\end{align}
Choose a local orientation and set
\[
 N=(\star\beta)^\sharp.
\]
Although \(N\) changes sign if the orientation is reversed,
\(N^\flat\otimes N^\flat\) and \(\Ric(N,N)\) are independent of this
choice.  Since the Hodge star is parallel and commutes with the Hodge
Laplacian, the Weitzenbock formulas on one-forms and \((n-1)\)-forms give
\[
 \ip{\mathcal R_{n-1}\beta}{\beta}
 =\Ric(N,N).
\]
Moreover, \(N\) is orthogonal to each \(U_a\) and
\(|N|^2=|\beta|^2=\det(H)\).  The resulting tensor identity is
\[
 \det(H)g
 =N^\flat\otimes N^\flat
  +\sum_{a,b=1}^{n-1}\operatorname{cof}(H)_{ab}
    \,dh^a\otimes dh^b.
\]
Indeed, where \(H\) is invertible, the second term is \(\det(H)\) times
the metric on the span of the \(U_a\), while the first term is
\(\det(H)\) times the metric on its orthogonal complement.
The identity at singular \(H\) follows by continuity.  Contracting it
with the Ricci tensor gives
\begin{equation}
 \ip{\mathcal R_{n-1}\beta}{\beta}
 =\Ric(N,N)
 =\det(H)\Scal
  -\sum_{a,b=1}^{n-1}\operatorname{cof}(H)_{ab}\Ric(U_a,U_b).
 \label{c2:eq:cofactor-curvature}
\end{equation}
In particular, \(\Ric\geq0\) implies
\[
 \ip{\mathcal R_{n-1}\beta}{\beta}\geq0.
\]
Moreover, \(H\geq0\), hence
\(\operatorname{cof}(H)\geq0\), while
\(B=(\Ric(U_a,U_b))_{ab}\geq0\).  Therefore
\eqref{c2:eq:harmonic-gradient} implies
\[
 0\leq\operatorname{cof}(H)\leq C_nI,
 \qquad
 \sum_{a,b=1}^{n-1}\operatorname{cof}(H)_{ab}\Ric(U_a,U_b)
 \leq C_n\operatorname{tr}(B)
 =C_n\sum_{a=1}^{n-1}\Ric(U_a,U_a).
\]
Since \(\Scal\geq1\), it follows on \(B(x,4A)\) that
\begin{equation}
 \ip{\mathcal R_{n-1}\beta}{\beta}
 \geq\det(H)-C_n\sum_{a=1}^{n-1}\Ric(U_a,U_a).
 \label{c2:eq:scalar-cofactor}
\end{equation}

The matrices \(H\) are uniformly bounded by
\eqref{c2:eq:harmonic-gradient}, so the determinant is Lipschitz on their
range.  By Cauchy--Schwarz, \eqref{c2:eq:local-Gram}, and Bishop--Gromov
comparison,
\begin{equation}
 \frac1{\vol_g B(x,2A)}
 \int_{B(x,2A)}\det(H)\dd\vol_g
 =1+o_{\varepsilon}(1).
 \label{c2:eq:determinant-average}
\end{equation}
Here \(o_{\varepsilon}(1)\to0\) as \(\varepsilon\to0\), with \(A\) fixed.
Integrate \eqref{c2:eq:scalar-cofactor} on \(B(x,2A)\), use the
nonnegativity just noted to restrict \eqref{c2:eq:local-Hodge} to that ball,
and apply the Ricci part of \eqref{c2:eq:local-Bochner}.  Since
\(\vol_g B(x,2A)\geq\vol_g B(x,A)\), one obtains
\begin{equation}
 1-o_{\varepsilon}(1)\leq\frac{C_n}{A^2}.
 \label{c2:eq:local-contradiction}
\end{equation}
Choose \(A=A_n\) so large that the right side is less than \(1/4\),
and then choose \(\varepsilon_n\) so small that the left side is greater
than \(1/2\).  This contradiction proves the lemma.
\end{proof}

\begin{remark}
The positivity of Ricci tensor can concentrate in one direction. However, in this situation, the previous Lemma implies that the other directions are not close to splitting at large scales. Consider the Berger sphere $S^3$ with orthonormal frame $X_1,X_2,X_3$ so that $\sec(X_1,X_2)=-1$, $\sec(X_1,X_3)=1$, $\sec(X_2,X_3)=1$. This is possible by prescribing the commutator relations $[X_1,X_2]=2X_3$, $[X_2,X_3]=X_1$, and $[X_3,X_1]=X_2$. Then $\Ric(X_3,X_3)=1$ and $\Ric(X_1,X_1)=\Ric(X_2.X_2)=0$. However, if $\theta^i\defeq X_i^*$, then $\beta\defeq \theta^1\wedge\theta^2=\star\theta^3$ satisfies $|\nabla \beta|^2=2$. This $\beta$ is not almost parallel so there are no splitting functions whose gradient are almost parallel to $X_1$, $X_2$. 
\end{remark}

\begin{proof}[Proof of Proposition~\ref{c2:prop:heat-gap}]
By constant rescaling it suffices to take \(\sigma=1\) and \(t=1\).
Indeed, the relative eigenvalues satisfy
\[
 d_j^{\,\sigma g}(1,x)=d_j^{\,g}(\sigma^{-1},x),
 \qquad 1\leq j\leq n.
\]
Set \(A=A_n\) as in Lemma~\ref{c2:lem:local-splitting}.

Suppose the conclusion fails. There are pointed complete noncompact manifolds
\((M_i,g_i,x_i)\) with
\[
 \Ric_{g_i}\geq0,\qquad
 \Scal_{g_i}\geq1,\qquad
 q_1(x_i)\longrightarrow0.
\]
Moreover,
\[
  q_1(x_i)=\vartheta_{n-1}(1,x_i)
\]
by \eqref{eq:rank-k-heat-defect} and
\eqref{c2:eq:eigenvalues}--\eqref{c2:eq:heat-tail}.  Apply
Proposition~\ref{prop:rank-k-heat-frame} with $k=n-1$ and $R=8A$.
For all large $i$, the resulting harmonic functions satisfy
\eqref{c2:eq:local-Gram}, \eqref{c2:eq:local-Hess}, and
\eqref{c2:eq:harmonic-gradient}, contradicting
Lemma~\ref{c2:lem:local-splitting}.  Hence \(q_1\) has a positive
dimensional infimum \(\delta_n\) on the normalized class.
Finally, \(d_2\) is the largest of \(d_2,\ldots,d_n\), so
\(d_2\geq q_1/(n-1)\).  Scaling back proves
\eqref{c2:eq:heat-gap}.
\end{proof}
 \section{Liouville averaging and Proof of main theorems}
\label{sec:angular-pairs}
In this section we prove Theorem \ref{thm:codim-one} and Theorem \ref{thm:codim-two} together based on common average estimates through Liouville measure. Similar arguments can be found in \cite{CaiVolume}*{Lemma 3.1, 3.2}. Throughout this section, $(M^n,g)$ is a smooth complete Riemannian manifold with
$\Ric_g\geq0$, and
\begin{equation}\label{eq:k-range}
  k\in\{1,2\}.
\end{equation}
In fact the argument in this section is valid for a full range of $k$ from $1$ through $n$, since positive curvature conditions in our setting will only force $1$ or $2$ positive eigenvalues $D_t$  we restrict our discussion to $k=1,2$. With stronger positive intermediate curvature, one can improve the estimates of number of positive eigenvalues of $D_t$ through splitting argument and the argument below will provide corresponding volume growth order bounds and effective constants, see \cites{antonelli2026universalvolumegrowthbounds, koirala2026volumegrowthpositiveintermediate}. 

Let $\omega_x$ be the surface measure on $S_xM$.  The following elementary estimate is the only place where the number of deficit directions
enters.

\begin{lemma}
\label{lem:angular-k}
There is a constant $c(n,k)>0$ with the following property.  If $A$ is a
nonnegative quadratic form on $\mathbb R^n$ with at least $k$ eigenvalues
not smaller than $\rho>0$, then every measurable
$F\subset S^{n-1}$ satisfies
\begin{equation}\label{eq:angular-k}
  \int_F A(v,v)\dd\omega(v)
  \geq c(n,k)\rho\,\omega(F)^{1+\frac2k}.
\end{equation}
\end{lemma}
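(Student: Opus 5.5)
Since $A(v,v)\ge \rho\,|\pi v|^2$, where $\pi$ is the orthogonal projection onto a $k$-dimensional subspace $V$ spanned by eigenvectors with eigenvalues $\ge\rho$, it suffices to prove
\[
\int_F |\pi v|^2\,d\omega(v)\ \ge\ c(n,k)\,\omega(F)^{1+\frac2k}
\]
for every measurable $F\subset S^{n-1}$. Writing $f(v)=|\pi v|^2$, this is a rearrangement statement. Among sets of a given measure $m=\omega(F)$, the integral of $f$ is minimized by a sublevel set $\{f<\tau\}$, the bathtub principle. So the whole task reduces to estimating the distribution function $\omega(\{v: |\pi v|<s\})$ for small $s$.

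The key step is the estimate
\[
\omega(\{v\in S^{n-1}: |\pi v|<s\})\le C(n,k)\,s^k, \qquad 0<s\le1.
\]
I would prove it by passing to Gaussian measure, or by using the explicit density of $\pi v$. For $v$ uniform on the sphere and $k<n$, the projection $\pi v\in\R^k$ has density proportional to $(1-|y|^2)^{(n-k-2)/2}$ on the unit ball. When $n-k\ge2$ this density is bounded, so the probability of $|y|<s$ is at most $C s^k$. The borderline case $n-k=1$ gives an integrable singularity near $|y|=1$ only, which is irrelevant for small $s$; the case $k=n$ is trivial since then $|\pi v|=1$. Alternatively, one can use the coarea formula in the coordinates $v=(\cos\theta\, a,\sin\theta\, b)$ with $a\in S^{k-1}$, $b\in S^{n-k-1}$, where the measure is $\cos^{k-1}\theta\sin^{n-k-1}\theta\,d\theta$. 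Then $|\pi v|=\cos\theta<s$ corresponds to $\theta$ near $\pi/2$, contributing $\int_0^s u^{k-1}\,du\sim s^k$ after substituting $u=\cos\theta$.

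With this in hand, the conclusion follows by a direct layer-cake argument. Choose $s$ with $C s^k=m/2$, that is, $s=(m/(2C))^{1/k}$, which is at most $1$ after possibly enlarging $C$ using $m\le\omega(S^{n-1})$. Then at least half of $F$ in measure lies in $\{|\pi v|\ge s\}$, so
\[
\int_F|\pi v|^2\,d\omega\ \ge\ \frac m2\,s^2 = c(n,k)\,m^{1+2/k}.
\]
This argument avoids the bathtub principle entirely.

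The only genuine work is the small-ball estimate for the projection. I expect it to be routine via the $\theta$-coordinates. Care is needed only in the edge cases $k=n$ and $k=n-1$, where one of the spheres $S^{n-k-1}$ degenerates, so I would treat these separately or check the density formula directly.
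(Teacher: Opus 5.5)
Your proposal is correct and is essentially the paper's proof. The paper likewise bounds $A(v,v)\ge\rho|\operatorname{proj}_\Pi v|^2$ and proves the band estimate $\omega\{|\operatorname{proj}_\Pi v|\le a\}\le b(n,k)a^k$ in the $\theta$-coordinates, enlarging $b$ so that $a\le1$; it then chooses $a=(\omega(F)/2b)^{1/k}$ so that at least half of $F$ lies outside the band, exactly as you do. One small difference: the paper writes $|\operatorname{proj}_\Pi v|=\sin\theta$ and uses $\arcsin a\le(\pi/2)a$, which covers the case $k=n-1$ directly, so the separate edge-case check you anticipate is not needed.
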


\begin{proof}
Let $\Pi$ be the span of $k$ orthonormal eigenvectors corresponding to
eigenvalues at least $\rho$.  Then
\[
  A(v,v)\geq\rho|\operatorname{proj}_{\Pi}v|^2.
\]
After a rotation, write
\[
  v=(\sin\theta\,\xi,\cos\theta\,\eta),
  \qquad
  \xi\in S^{k-1},\quad
  \eta\in S^{n-k-1},\quad
  0\leq\theta\leq\frac{\pi}{2}.
\]
Then $|\operatorname{proj}_{\Pi}v|=\sin\theta$, and spherical measure has
density
\[
  (\sin\theta)^{k-1}(\cos\theta)^{n-k-1}
  \dd\theta\dd\omega(\xi)\dd\omega(\eta).
\]
Since $\arcsin a\leq(\pi/2)a$ for $0<a\leq1$, integration over
$0\leq\theta\leq\arcsin a$ gives a finite constant $b(n,k)$, which we
enlarge so that $b(n,k)\geq\omega(S^{n-1})$, with
\begin{equation}\label{eq:tubular-band}
  \omega\{v\in S^{n-1}:|\operatorname{proj}_{\Pi}v|\leq a\}
  \leq b(n,k)a^k
  \qquad(0<a\leq1).
\end{equation}
If $\omega(F)=0$, there is nothing to prove.  Otherwise set
\[
  a=\left(\frac{\omega(F)}{2b(n,k)}\right)^{1/k}\leq1.
\]
By \eqref{eq:tubular-band} and the choice of $a$, the part of $F$ outside the band $\{|\operatorname{proj}_{\Pi}v|\leq a\}$ has measure at least $\omega(F)/2$. Outside the band,
$A(v,v)\geq\rho a^2$, and therefore
\[
  \int_FA(v,v)\dd\omega(v)
  \geq\frac12\rho a^2\omega(F)
  =\frac{\rho}{2(2b(n,k))^{2/k}}\omega(F)^{1+\frac2k}.
\]
This proves \eqref{eq:angular-k} with
$c(n,k)=[2(2b(n,k))^{2/k}]^{-1}$.
\end{proof}

Let $\pi:SM\to M$ be the footpoint map, let $\varphi_s$ be geodesic
flow, and give $SM$ Liouville measure
\[
  d\lambda=d\vol_g\dd\omega.
\]
For a Borel set $\Omega\subset M$, set
\[
  S\Omega=\pi^{-1}(\Omega),
  \qquad
  \Omega_a=\{z\in M:d(z,\Omega)<a\}.
\]
If $v\in SM$, write $\gamma_v(s)=\pi(\varphi_sv)$ and define
\begin{equation}\label{eq:minimizing-directions-k}
  \mathcal G_L
  =\{v\in SM:\gamma_v|_{[-L/2,L/2]}\text{ is minimizing}\},
  \qquad
  Q_\Omega(L)=\lambda(S\Omega\cap\mathcal G_L).
\end{equation}
Note that $\mathcal G_L$ is Borel.

\begin{proposition}\label{prop:liouville-tail-k}
Fix $t,\rho>0$.  Suppose that $D_t(x)$ has at least $k$ eigenvalues not
smaller than $\rho$ at every $x\in M$.  If $\Omega\subset M$ is Borel,
$L>0$, and
$\vol_g(\Omega_{L/3})<\infty$, then
\begin{equation}\label{eq:liouville-tail-k}
  Q_\Omega(L)
  \leq K_{n,k}\left(\frac{t}{\rho}\right)^{k/2}
       \frac{\vol_g(\Omega_{L/3})}{L^k},
  \qquad
  K_{n,k}=3^k\left(\frac{4n}{c(n,k)}\right)^{k/2}.
\end{equation}
\end{proposition}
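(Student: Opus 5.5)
The plan is to integrate the segment estimate of Proposition~\ref{prop:centered-segment} over all minimizing directions. Invariance of Liouville measure under the geodesic flow moves the integral to the flowed sets. Lemma~\ref{lem:angular-k} bounds it from below fiberwise, and a H\"older step closes a self-improving inequality for $Q=Q_\Omega(L)$.

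\emph{Setup and the upper bound.} Set $r=L/3$, so that $[-3r/2,3r/2]=[-L/2,L/2]$. Then for every $v\in\mathcal G_L$ the geodesic $\gamma_v|_{[-3r/2,3r/2]}$ is a unit-speed minimizing geodesic. Proposition~\ref{prop:centered-segment} gives
\[
\int_{-r}^{r}D_t(\varphi_sv,\varphi_sv)\,ds\le\frac{8nt}{r},
\]
where $D_t(w,w)$ for $w\in SM$ means $D_t$ evaluated at $\pi(w)$. Note that $Q\le\lambda(S\Omega)=\omega(S^{n-1})\vol_g(\Omega)<\infty$, since $\Omega\subset\Omega_{L/3}$. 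If $Q=0$ there is nothing to prove, so assume $Q>0$. Integrating over $E=S\Omega\cap\mathcal G_L$ and using Fubini (the integrand is continuous in $(s,v)$ and $E$ is Borel) gives
\[
\int_{-r}^{r}\int_E D_t(\varphi_sv,\varphi_sv)\,d\lambda(v)\,ds\le\frac{8nt}{r}\,Q .
\]

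\emph{Flow invariance and the fiberwise lower bound.} Since $\varphi_s$ preserves $\lambda$, the inner integral equals $\int_{E_s}D_t(w,w)\,d\lambda(w)$, where $E_s=\varphi_s(E)$ and $\lambda(E_s)=Q$. For $|s|\le r$ we have $\pi(E_s)\subset\Omega_{L/3}$, up to the boundary, which does not matter after enlarging by an arbitrarily small amount, or by noting $d(\gamma_v(s),\Omega)\le|s|$. Write $F_x\subset S_xM$ for the fiber of $E_s$ over $x$, and $f(x)=\omega(F_x)$. The hypothesis on the eigenvalues of $D_t(x)$ together with Lemma~\ref{lem:angular-k} gives
\[
\int_{E_s}D_t\,d\lambda\ge c(n,k)\rho\int_{\Omega_{L/3}}f^{1+2/k}\,d\vol_g .
\]
Since $\int_{\Omega_{L/3}} f\,d\vol_g=Q$, H\"older (Jensen) on the finite-measure set $\Omega_{L/3}$ yields
\[
\int_{\Omega_{L/3}} f^{1+2/k}\,d\vol_g\ge Q^{1+2/k}\,V^{-2/k},\qquad V=\vol_g(\Omega_{L/3}).
\]

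\emph{Conclusion.} Combining the two bounds gives
\[
2r\,c\rho\,Q^{1+2/k}V^{-2/k}\le\frac{8ntQ}{r}.
\]
Dividing by $Q$ gives $Q^{2/k}\le 4nt\,V^{2/k}/(c\rho r^2)$, that is,
\[
Q\le\Bigl(\frac{4n}{c}\Bigr)^{k/2}\Bigl(\frac t\rho\Bigr)^{k/2}\frac{V}{r^k}.
\]
With $r=L/3$ this is exactly \eqref{eq:liouville-tail-k} with $K_{n,k}=3^k(4n/c(n,k))^{k/2}$.

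The main obstacle is the bookkeeping rather than any estimate. One must check measurability of $E$, of $E_s$ and of the fibers $F_x$ (they are Borel images under the diffeomorphisms $\varphi_s$). One must also justify interchanging the $s$- and $\lambda$-integrals, and confirm that the flowed sets stay inside $S\Omega_{L/3}$ so that the H\"older step uses $V$.
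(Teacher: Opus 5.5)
Your proposal is correct and follows the paper's proof essentially step for step: you integrate Proposition~\ref{prop:centered-segment} over $E=S\Omega\cap\mathcal G_L$, move to $\varphi_sE$ by Liouville invariance, apply Lemma~\ref{lem:angular-k} fiberwise and H\"older on $\Omega_{L/3}$, then cancel $\lambda(E)$ and raise to the power $k/2$. Your explicit check that $Q\le\omega(S^{n-1})\vol_g(\Omega)<\infty$, which the cancellation needs, is a small improvement on the paper's write-up; for the interchange of integrals you only need Borel measurability and nonnegativity of $D_t$ (Tonelli), not continuity.
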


\begin{proof}
Write $L=3r$, set $E=S\Omega\cap\mathcal G_{3r}$. Integrating Proposition~\ref{prop:centered-segment} over $E$
gives
\begin{equation}\label{eq:integrated-centered-budget-k}
  \int_{-r}^{r}\int_E
  D_t(\varphi_sv,\varphi_sv)\dd\lambda(v)\dd s
  \leq\frac{8nt}{r}\lambda(E).
\end{equation}
For fixed $s\in(-r,r)$, Liouville invariance gives
$\lambda(\varphi_sE)=\lambda(E)$, and all footpoints of $\varphi_sE$ lie in
$\Omega_r$.  Let
\[
  q_s(z)=\int_{S_zM}\mathbf1_{\varphi_sE}(v)\dd\omega_z(v).
\]
Then $q_s$ vanishes almost everywhere off $\Omega_r$ and
\begin{equation}\label{eq:fiber-mass-k}
  \int_{\Omega_r}q_s\dd\vol_g=\lambda(E).
\end{equation}
Lemma~\ref{lem:angular-k} applied fiberwise, followed by H\"older's inequality, yields
\begin{align}
\int_{\varphi_sE}D_t(v,v)\dd\lambda(v)&=\int_{\Omega_r}\int_{\varphi_sE\cap S_zM}D_t(v,v)\dd\omega_z(v)\dd\vol_g(z)\notag\\
  &\geq c(n,k)\rho
    \int_{\Omega_r}q_s(z)^{1+2/k}\dd\vol_g(z)\notag\\
  &\geq c(n,k)\rho\,
    \frac{\lambda(E)^{1+2/k}}{\vol_g(\Omega_r)^{2/k}}.
  \label{eq:fiber-lower-k}
\end{align}
If $\lambda(E)=0$, the conclusion is immediate.  Otherwise integrate \eqref{eq:fiber-lower-k} over the interval
$(-r,r)$, compare with \eqref{eq:integrated-centered-budget-k}, cancel
$\lambda(E)$, and raise the resulting inequality to the power $k/2$.  Since
$r=L/3$, this gives exactly \eqref{eq:liouville-tail-k}.
\end{proof}

For almost every ordered pair $(p,q)$ there is a unique minimizing geodesic. Let $m(p,q)$ be its midpoint. This is to align with the symmetric parametrization of geodesic that has been used in Proposition \ref{prop:centered-segment} and \eqref{eq:minimizing-directions-k}. The cut relation and the diagonal
are null in the product volume measure.  
For Borel $\Omega\subset
M$, define
\begin{equation}\label{eq:pair-set-k}
  \mathcal E_\Omega(R)
  =\{(p,q):d(p,q)<R,\ m(p,q)\in\Omega\}.
\end{equation}

\begin{proposition}
\label{prop:ordered-pairs-k}
Under the hypotheses of Proposition~\ref{prop:liouville-tail-k}, if
$\Omega\subset M$ is Borel, $R>0$, and
$\vol_g(\Omega_{R/3})<\infty$, then
\begin{equation}\label{eq:ordered-pair-k}
  \vol_{M\times M}(\mathcal E_\Omega(R))
  \leq K_{n,k}\left(\frac{t}{\rho}\right)^{k/2}
       \int_0^R\ell^{n-1-k}\vol_g(\Omega_{\ell/3})\dd\ell.
\end{equation}
The integral is finite at the origin because $n>k$.
\end{proposition}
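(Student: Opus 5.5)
The plan is to parametrize ordered pairs $(p,q)$ with a unique minimizing geodesic by their midpoint data: the midpoint $m=m(p,q)$, the unit direction $v\in S_mM$ of the geodesic at $m$, and the half-length $\ell/2$ with $\ell=d(p,q)$. The map is $(v,\ell)\mapsto(p,q)=(\gamma_v(-\ell/2),\gamma_v(\ell/2))$, defined on $\{(v,\ell): v\in\mathcal G_\ell\}$. Off a null set, this parametrization is injective onto the pairs whose minimizing geodesic is unique. Indeed, such a pair determines $m$, $v$ and $\ell$, and $v\in\mathcal G_\ell$ holds precisely because the segment $\gamma_v|_{[-\ell/2,\ell/2]}$ is minimizing. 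Consequently $\mathcal E_\Omega(R)$ is, up to a null set, the image of $\{(v,\ell): \pi(v)\in\Omega,\ v\in\mathcal G_\ell,\ 0<\ell<R\}$.

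Next I will compute the Jacobian of this map relative to $d\lambda(v)\,d\ell$. By the standard change of variables in Santaló/Liouville form, for the flow map $(v,\ell)$ one has
\[
d\vol(p)\,d\vol(q)=\ell^{\,n-1}\,\mathcal J(v,\ell)\,d\ell\,d\lambda(v)
\]
up to a dimensional factor. Here $\mathcal J$ is a normalized Jacobi-field determinant, obtained from $d\vol(q)=\mathcal A_p(\ell,u)\,d\ell\,d\omega_p(u)$ in polar coordinates at $p$, combined with Liouville invariance under the flow by time $\ell/2$. Under $\Ric\ge0$, Bishop comparison gives $\mathcal A_p(\ell,u)\le \ell^{n-1}$ along the minimizing portion. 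Writing $d\vol(p)\,d\omega_p(u)=d\lambda$ at $p$ and pushing by $\varphi_{\ell/2}$, which preserves $\lambda$ by Liouville invariance, I obtain
\[
\vol_{M\times M}(\mathcal E_\Omega(R))\le\int_0^R\ell^{n-1}\,\lambda\bigl(S\Omega\cap\mathcal G_\ell\bigr)\,d\ell=\int_0^R\ell^{n-1}Q_\Omega(\ell)\,d\ell.
\]
Here I use that the vector at $p$ is $\varphi_{-\ell/2}v$ and that the midpoint condition translates into $\pi(\varphi_{\ell/2}w)\in\Omega$.

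Finally, I apply Proposition~\ref{prop:liouville-tail-k} with $L=\ell$. The hypothesis $\vol_g(\Omega_{\ell/3})\le\vol_g(\Omega_{R/3})<\infty$ holds for $\ell\le R$. This gives $Q_\Omega(\ell)\le K_{n,k}(t/\rho)^{k/2}\vol_g(\Omega_{\ell/3})\ell^{-k}$, and inserting it yields \eqref{eq:ordered-pair-k}. Finiteness near $0$ follows from $\vol_g(\Omega_{\ell/3})\le\vol_g(\Omega_{R/3})$ together with $n-1-k>-1$.

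The main obstacle is the rigorous change of variables: checking that the map is injective and measurable off null sets (the cut locus and the diagonal), and that the Jacobian bound $\ell^{n-1}$ holds with constant exactly $1$. I would handle this with polar coordinates centered at $p$, restricted to the segment domain before the cut point, so that Bishop's inequality $\mathcal A_p(\ell,u)\le\ell^{n-1}$ applies directly. I would then use the measurable bijection $(p,u,\ell)\leftrightarrow(\varphi_{\ell/2}(p,u),\ell)$, which preserves $\lambda\otimes d\ell$ by Liouville's theorem.
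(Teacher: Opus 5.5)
Your proposal is correct and follows the paper's proof essentially step for step: polar coordinates centered at $p$ with the Jacobian set to zero past the cut time, recentering at the midpoint via the Liouville-invariant map $(u,\ell)\mapsto(\varphi_{\ell/2}u,\ell)$, the Bishop bound $\mathcal A_p(\ell,u)\le\ell^{n-1}\mathbf 1_{\mathcal G_\ell}(\varphi_{\ell/2}u)$, and then Proposition~\ref{prop:liouville-tail-k} with $L=\ell$. You can drop the hedge ``up to a dimensional factor'', since polar coordinates give $d\vol_g(p)\,d\vol_g(q)=\mathcal A_p(\ell,u)\,d\ell\,d\lambda(u)$ exactly; and because only an upper bound is needed, you do not have to check injectivity of the midpoint parametrization separately.
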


\begin{proof}
For $v\in SM$, let
\[
  c(v)=\sup\{a>0:\gamma_v|_{[0,a]}\text{ is minimizing}\}
\]
be the forward cut time.  For $0<\ell<c(v)$, let
$A_v(\ell):v^\perp\to\dot\gamma_v(\ell)^\perp$ be the normal Jacobi
tensor with $A_v(0)=0$ and $A_v'(0)=I$.  Define the endpoint polar
Jacobian, extended through the cut locus, by
\[
  \overline J(v,\ell)=
  \begin{cases}
    |\det A_v(\ell)|,&0<\ell<c(v),\\
    0,&\ell\geq c(v),
  \end{cases}
\]
and set $\overline J(v,0)=0$.  Endpoint polar coordinates give, for every
nonnegative measurable $f$,
\begin{equation}\label{eq:endpoint-polar-k}
  \int_{M\times M}f(p,q)\dd\vol_g(p)\dd\vol_g(q)
  =\int_{SM}\int_0^\infty
  f(\pi v,\pi\varphi_\ell v)\overline J(v,\ell)
  \dd\ell\dd\lambda(v).
\end{equation}
At fixed $\ell$, recenter by $w=\varphi_{\ell/2}v$.  Liouville
invariance preserves $d\lambda$, so, with
\[
  \overline J_{\mathrm{sym}}(w,\ell)
  =\overline J(\varphi_{-\ell/2}w,\ell),
\]
equation \eqref{eq:endpoint-polar-k} becomes
\begin{equation}\label{eq:midpoint-polar-k}
  \vol_{M\times M}(\mathcal E_\Omega(R))
  =\int_{S\Omega}\int_0^R
    \overline J_{\mathrm{sym}}(w,\ell)\dd\ell\dd\lambda(w).
\end{equation}
Radial Bishop comparison gives
\begin{equation}\label{eq:jacobian-bound-k}
  0\leq\overline J_{\mathrm{sym}}(w,\ell)
  \leq\ell^{n-1}\mathbf1_{\mathcal G_\ell}(w).
\end{equation}
Combining
\eqref{eq:midpoint-polar-k}, \eqref{eq:jacobian-bound-k}, and Proposition~\ref{prop:liouville-tail-k} proves
\eqref{eq:ordered-pair-k}.
\end{proof}

\begin{proof}[Proof of Theorem~\ref{thm:codim-one}]
Let $\delta=\delta(n,\epsilon)>0$ be supplied by
Proposition~\ref{c1:prop:trace-gap}.  Since $D_{1/2}\geq0$ and
$\tr D_{1/2}\geq\delta$, its largest eigenvalue is at least
\begin{equation}\label{eq:codim-one-rho}
  \rho=\frac{\delta}{n}.
\end{equation}
Thus Propositions~\ref{prop:liouville-tail-k} and
\ref{prop:ordered-pairs-k} apply with $k=1$ and $t=1/2$.

Fix $o\in M$, write $V(R)=\vol_g B(o,R)$, take
$\Omega=B(o,2R)$, and use pair-distance bound $2R$.  Almost every pair in
$B(o,R)^2$ lies in $\mathcal E_\Omega(2R)$.  For
$0<\ell<2R$, Bishop--Gromov comparison gives
\begin{equation}\label{eq:codim-one-ball-neighborhood}
  \Omega_{\ell/3}\subset B(o,8R/3),
  \qquad
  V(8R/3)\leq(8/3)^nV(R).
\end{equation}
Consequently
\begin{align}
  V(R)^2
  &\leq K_{n,1}\left(\frac{n}{2\delta}\right)^{1/2}
     \left(\frac83\right)^nV(R)
     \int_0^{2R}\ell^{n-2}\dd\ell\notag\\
  &=C(n,\epsilon)R^{n-1}V(R).
  \label{eq:codim-one-cancellation}
\end{align}
Every ball of positive radius has finite positive volume by Hopf--Rinow.
Cancelling $V(R)$ proves Theorem~\ref{thm:codim-one}. 
\end{proof}

\begin{proof}[Proof of Theorem~\ref{thm:codim-two}]
Fix $o\in M$ and write $V(R)=\vol_g B(o,R)$.  Let $\delta_n$ be
the dimensional constant from Proposition~\ref{c2:prop:heat-gap}.  Proposition~\ref{c2:prop:heat-gap} says that,
at time $t=\sigma^{-1}$, the second eigenvalue of $D_t$ is at least
\begin{equation}\label{eq:codim-two-rho}
  \rho_n=\frac{\delta_n}{n-1}.
\end{equation}
Apply Proposition~\ref{prop:ordered-pairs-k} with $k=2$,
$\Omega=B(o,2R)$, and pair-distance bound $2R$.  As above, almost every
pair in $B(o,R)^2$ is counted, and
\begin{equation}\label{eq:codim-two-ball-neighborhood}
  \Omega_{\ell/3}\subset B(o,8R/3),
  \qquad
  V(8R/3)\leq(8/3)^nV(R)
  \quad(0<\ell<2R).
\end{equation}
Since $n>2$,
\begin{align}
  V(R)^2
  &\leq K_{n,2}\frac{\sigma^{-1}}{\rho_n}
    \left(\frac83\right)^nV(R)
    \int_0^{2R}\ell^{n-3}\dd\ell\notag\\
  &\leq C_n\sigma^{-1}R^{n-2}V(R).
  \label{eq:codim-two-cancellation}
\end{align}
Cancelling $V(R)$ proves Theorem~\ref{thm:codim-two}.
\end{proof}
 
\appendix
\section{Codimension-one volume estimates for sectional curvature}
Gromov recorded the following codimension-one estimate under nonnegative
sectional curvature with only a sketch of the strategy \cite{Gro85}. We give a direct proof of it with different techniques. 

\begin{theorem}[Gromov]\label{thm:Gromov}
Let $(X,\dist)$ be a complete $n$-dimensional Alexandrov space with
nonnegative curvature.  Assume that there is a real number
$\delta\in(0,1)$ such that
\[
  \haus^n(B_1^X(x))\leq(1-\delta)\omega_n
  \qquad\text{for every }x\in X,
\]
where $\omega_n$ is the volume of the Euclidean unit ball in $\mathbb R^n$.
Then there exists a constant $C=C(n,\delta)$ such that, for every $p\in X$
and every $R\geq1$,
\[
  \haus^n(B_R^X(p))\leq C R^{n-1}.
\]
\end{theorem}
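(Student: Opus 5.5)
We argue through a Gromov-style ``no line at every scale'' plus cone-at-infinity induction, using Alexandrov tools that are independent of the heat-kernel machinery above, since Proposition~\ref{c1:prop:trace-gap} is formulated for smooth manifolds. Bishop--Gromov in the Alexandrov setting gives that $r\mapsto \haus^n(B_r(p))/r^n$ is nonincreasing. We aim for the scale-invariant statement: there is $\eta=\eta(n,\delta)>0$ such that for every $p\in X$ and $R\ge 1$,
\[
  \haus^n(B_{2R}(p))\le 2^{n-1}\cdot 2^{\eta'}\ \text{fails only boundedly often},
\]
i.e.\ the doubling ratio $\haus^n(B_{2R}(p))/\haus^n(B_R(p))$ is at most $2^{n-1}$ up to a summable error. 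Concretely, we prove the following contradiction statement. For any $\epsilon>0$ there is $R_0(n,\delta,\epsilon)$ such that $\haus^n(B_R(p))\ge \epsilon R^n$ with $R\ge R_0$ is impossible. This already gives sublinear-in-$R^n$ growth. Then we bootstrap to $R^{n-1}$ by a second blow-down argument.

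First step (no noncollapsed blow-down). Suppose $\haus^n(B_{R_i}(p_i))\ge\epsilon R_i^n$ with $R_i\to\infty$. Rescale by $R_i^{-1}$. By Bishop--Gromov monotonicity the rescaled spaces are uniformly noncollapsed, so by Gromov compactness and Burago--Gromov--Perelman volume continuity (Perelman's stability) the limit is an $n$-dimensional nonnegatively curved Alexandrov space $Y$. Points of $Y$ have tangent cones, and almost every point of $Y$ is regular with tangent cone $\R^n$. Choose a regular $y\in Y$ and a tiny $s$ with $\haus^n(B_s(y))>(1-\delta/2)\omega_ns^n$. By volume continuity, points $x_i$ in the original spaces satisfy $\haus^n(B_{sR_i}(x_i))>(1-\delta/2)\omega_n(sR_i)^n$. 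Monotonicity then gives $\haus^n(B_1(x_i))\ge(1-\delta/2)\omega_n$ once $sR_i\ge1$, which contradicts the hypothesis. Hence $\haus^n(B_R(p))/R^n\to0$ uniformly.

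Second step (upgrade to codimension one), which is the main obstacle. We use the Perelman--Petrunin quasigeodesic/gradient-curve structure near infinity, or more simply a pairing argument analogous to Proposition~\ref{prop:ordered-pairs-k}. Here we replace the heat deficit by the following quantitative fact: any point through which a geodesic segment of length $\ge L$ passes with midpoint there, whose ``space of directions'' is $\delta'$-far from $S^{n-1}$ at scale $1$, has the set of directions admitting length-$L$ two-sided extensions of measure $\le C/L$. This would follow from Toponogov: if two antipodal-ish directions $\pm v$ both extend to length $L$, then comparison forces the ball of radius $1$ around the midpoint to be $O(1/L)$-Hausdorff close to splitting in direction $v$. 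If directions extending this far had measure $\ge C/L$ in $S_x$, we would obtain $n$ nearly independent almost-lines, so $B_1(x)$ would be close to a Euclidean ball, and volume continuity would contradict the deficit. Feeding the bound ``measure of long directions $\le C(n,\delta)/L$'' into the endpoint polar formula \eqref{eq:endpoint-polar-k}, with the Bishop bound $\overline J\le\ell^{n-1}$ (valid via Alexandrov Bishop comparison and the Liouville-type invariance needed for geodesic flow), would give $\haus^n(B_R)^2\le C\,\haus^n(B_{3R})\int_0^{2R}\ell^{n-2}d\ell$, hence $CR^{n-1}$.

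The hardest point is the quantitative claim that a set of directions of measure $\gg 1/L$ all extending to length $L$ forces near-Euclidean unit balls. I would first try proving it with a compactness argument: a contradiction sequence yields, in the limit, a point whose long extensions span all of $S^{n-1}$ modulo a null set, so that by the splitting theorem it is $\R^n$. The rate $1/L$ must then come from a separate Toponogov angle-excess estimate along each long segment. A second difficulty is justifying the geodesic-flow measure invariance on Alexandrov spaces; to avoid it, I would work with the pairs $(p,q)$ and midpoints directly, via Bishop--Gromov in the form of the Cheeger--Colding segment inequality, which holds in this setting.
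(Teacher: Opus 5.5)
\textbf{There is a genuine gap.} Your first step is correct; it is the standard argument that the asymptotic volume ratio tends to zero uniformly. But it only gives $\haus^n(B_R(p))=o(R^n)$. All of the codimension-one content sits in your second step, and there the decisive lemma is asserted, not proved.

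What you need is a pointwise and sharp statement: at almost every $x$, the set $F_L(x)$ of directions $v$ with $\gamma_v|_{[-L/2,L/2]}$ minimizing has measure at most $C(n,\delta)/L$.

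\emph{The compactness route cannot produce this.} In a contradiction sequence the measures $C/L_i$ tend to zero, so nothing survives in the limit. Since no lower volume bound is assumed, the spaces may also collapse, so the spaces of directions need not converge. Your claim that ``long extensions span $S^{n-1}$ modulo a null set'' would need a lower bound on $\omega_x(F_L(x))$ independent of $L$.

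\emph{The Toponogov route is not a formality either, because the claim is borderline.} Take the flat product of $\R^{n-1}$ with a circle of length $2\pi\epsilon$, and let $\theta$ be the angle between $v$ and the $\R^{n-1}$-factor. Then $F_L(x)$ is the band $\{|\sin\theta|\le\pi\epsilon/L\}$ around an equator. So long directions can concentrate in a band of width $\sim 1/L$, and $\omega_x(F_L(x))\ge C/L$ only guarantees $n$ long directions whose frame has smallest singular value of order $C/L$, and no better. To reach a contradiction you would need two further ingredients:
(i) a quantitative splitting at the midpoint of a length-$L$ segment with error $O(1/L)$ at scale $1$;
(ii) a lemma turning $n$ almost splittings, each with error $\eta$ and frame degeneracy $\theta$, into closeness of $B_1(x)$ to a Euclidean ball with error $O(\eta/\theta)$.
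Neither is supplied. Any loss here, such as a rate $L^{-1/2}$ in (i) or an amplification worse than $\theta^{-1}$ in (ii), destroys the $1/L$ bound and with it the exponent $n-1$.

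\emph{There is a second gap in the averaging step.} Your final inequality is the midpoint polar formula \eqref{eq:midpoint-polar-k} with $\overline J_{\mathrm{sym}}(w,\ell)\le\ell^{n-1}\mathbf 1_{\mathcal G_\ell}(w)$. In the smooth setting this rests on Liouville invariance of the geodesic flow, and an Alexandrov space has no geodesic flow. The segment inequality you propose as a substitute only integrates functions of the point $\gamma_{pq}(s)$. It sees nothing of the direction of $\gamma_{pq}$ at its midpoint, so it cannot encode the constraint that this direction lies in $F_\ell(m(p,q))$. What is actually needed is a two-sided (midpoint) Bishop inequality for the map $(x,v,\ell)\mapsto(\gamma_v(-\ell/2),\gamma_v(\ell/2))$, and you do not establish it.

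\emph{The paper avoids both issues.} After rescaling to $X_R=(X,R^{-1}\dist)$, the contrapositive of volume rigidity (Proposition~\ref{prop:VolRigidity}) places every point in the $(\epsilon,n-1)$-effective stratum at scale $R^{-1}$. The Li--Naber packing theorem (Theorem~\ref{thm:packing}) then bounds the number of disjoint $1/\beta$-balls centered in $B_R^X(p)$ by $C(n,\delta)R^{n-1}$. A maximal separated net together with Bishop--Gromov converts this into $\haus^n(B_R^X(p))\le CR^{n-1}$. That $(n-1)$-dimensional packing estimate is precisely the sharp quantitative input your second step tries to rebuild by hand.
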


We use two ingredients from Alexandrov geometry.  The first is the following
quantitative form of volume rigidity, a standard consequence of compactness
and volume convergence for Alexandrov spaces \cite{BGP92}.

\begin{proposition}\label{prop:VolRigidity}
For every $n$ and $\delta>0$, there exists
$\epsilon=\epsilon(n,\delta)>0$ with the following property.  If
$X\in\mathrm{Alex}^n(0)$ and, for some pointed complete length space
$(Z,z)$,
\[
  \dist_{GH}\bigl(B_\rho^X(x),
    B_\rho^{\mathbb R^n\times Z}((0,z))\bigr)
  <\epsilon\rho,
\]
then
\[
  \haus^n(B_\rho^X(x))>(1-\delta)\omega_n\rho^n.
\]
\end{proposition}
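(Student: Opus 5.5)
By scaling invariance of both the hypothesis and the conclusion, it suffices to treat $\rho=1$: replacing the metric $\dist$ by $\rho^{-1}\dist$ keeps $X$ in $\mathrm{Alex}^n(0)$, rescales $\haus^n$ by $\rho^{-n}$, and turns the Gromov--Hausdorff hypothesis into $\dist_{GH}(B_1^X(x),B_1^{\mathbb R^n\times Z}((0,z)))<\epsilon$. I then argue by contradiction. Suppose that for some $n,\delta$ no $\epsilon$ works. Then there are spaces $X_i\in\mathrm{Alex}^n(0)$, points $x_i$, and pointed complete length spaces $(Z_i,z_i)$ with
\[
\dist_{GH}\bigl(B_1^{X_i}(x_i),B_1^{\mathbb R^n\times Z_i}((0,z_i))\bigr)\to0,
\qquad
\haus^n(B_1^{X_i}(x_i))\le(1-\delta)\omega_n .
\]

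The first step is precompactness. By Gromov's compactness theorem for $\mathrm{Alex}^n(0)$ (or Bishop--Gromov doubling), after passing to a subsequence $(X_i,x_i)$ converges in the pointed Gromov--Hausdorff sense to a pointed Alexandrov space $(Y,y)$ of curvature $\ge0$ and dimension $\le n$. The unit balls $B_1^{\mathbb R^n\times Z_i}$ are GH-close to $B_1^{X_i}$, so they also converge, and $B_1^Y(y)$ is a GH limit of $B_1^{\mathbb R^n\times Z_i}((0,z_i))$.

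The second step, which is the main point, is to show that $B_1^Y(y)$ is isometric to $B_1^{\mathbb R^n}(0)$ and that $\dim Y=n$. The ball $B_1^Y(y)$ contains a GH limit of $n$ orthogonal coordinate segments of length $2$ through the center, and the Euclidean factor produces a full $n$-parameter family of such segments. Hence the Hausdorff dimension of $B_1^Y$ is at least $n$. Since it is also at most $n$, we get $\dim Y=n$, so the sequence is noncollapsed. The Euclidean factor also forces the extremal length structure of $\mathbb R^n$. I expect this to be the hardest step: $Z_i$ is arbitrary, so it is not obvious why the limit ball cannot be $\mathbb R^n\times$ (something nontrivial) inside radius $1$.

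My plan is to use the dimension bound. $B_1^Y$ contains an isometric copy of the limit of $B^{\mathbb R^n\times Z_i}_1$, and a ball in $\mathbb R^n\times Z$ contains $B_1^{\mathbb R^n}\times\{z\}$ together with points $(0,w)$ for $w\in Z$ near $z$. If $Z_i$ had points at a definite distance $a>0$ from $z_i$ within the ball, then the limit would contain $B^{\mathbb R^n}_{1/2}\times[0,a']$ isometrically, which has Hausdorff dimension $n+1$. This contradicts $\dim Y\le n$. So $Z_i\cap B_1(z_i)$ collapses to a point, and $B_1^Y(y)$ is isometric to $B_1^{\mathbb R^n}(0)$.

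The third step is volume convergence for noncollapsed Alexandrov spaces (Burago--Gromov--Perelman): $\haus^n(B_1^{X_i}(x_i))\to\haus^n(B_1^Y(y))=\omega_n$. This contradicts the assumed bound $\haus^n(B_1^{X_i}(x_i))\le(1-\delta)\omega_n$ and finishes the proof. Note that the contradiction argument makes $\epsilon$ depend only on $n$ and $\delta$, as required.
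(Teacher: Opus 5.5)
Your argument is correct and takes the route the paper indicates: contradiction, Gromov--Hausdorff precompactness of $\mathrm{Alex}^n(0)$, and Burago--Gromov--Perelman volume convergence, with your Step~2 filling in the noncollapsing and the limit-ball identification that the paper's one-line justification leaves implicit. Step~2 can be shortened, since the isometric copy of the closed Euclidean unit ball inside $\bar B_1^Y(y)$ already forces $\dim Y=n$ and, with Bishop--Gromov, $\haus^n(B_1^Y(y))=\omega_n$, so you never need to show that the $Z_i$-directions collapse.
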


The second is the Li--Naber packing theorem
\cite{LiNaber2020SingularSets}.

\begin{theorem}[Li--Naber]\label{thm:packing}
Let $X\in\mathrm{Alex}^n(-1)$.  For every $n$ and $\epsilon>0$, there are
constants
\[
  C_p=C_p(n,\epsilon),
  \qquad
  \beta=\beta(n,\epsilon)\in(0,1]
\]
such that the following holds.  If
\[
  x_i\in\mathcal S_{\epsilon,\beta r_i}^k(X)\cap B_1(q),
  \qquad
  B_{r_i}(x_i)\text{ are pairwise disjoint},
  \qquad
  r_i\leq1,
\]
then
\[
  \sum_i r_i^k\leq C_p.
\]
Here $\mathcal S_{\epsilon,r}^k(X)$ is the $(\epsilon,k)$-effective stratum
at scale $r$: a point $x$ belongs to this stratum when $B_r(x)$ is not
$\epsilon r$-Gromov--Hausdorff close to a ball in
$\mathbb R^{k+1}\times Z$ for any metric space $Z$.
\end{theorem}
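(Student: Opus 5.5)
This packing estimate is quoted from \cite{LiNaber2020SingularSets}, and in the appendix we only apply it. If I had to prove it myself, I would follow the quantitative-stratification scheme of Cheeger--Naber and Naber--Valtorta, with the volume ratio as the monotone energy. After rescaling I work in $B_2(q)$ and at scales $r\le 1$, where the lower curvature bound $-1$ is almost $0$. The organizing quantity is the Bishop--Gromov volume ratio
\[
  \theta_r(x)=\frac{\haus^n(B_r(x))}{V_{-1}(r)},
\]
which is nonincreasing in $r$ and bounded between two positive constants depending only on $n$ in the noncollapsed Alexandrov setting. Its total drop over dyadic scales is therefore uniformly bounded. Consequently, for each $x$ all but $N(n,\epsilon)$ dyadic scales are \emph{pinched}, meaning $\theta_{2r}(x)-\theta_{\beta r}(x)<\eta$.

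The first step is a pair of rigidity statements, each proved by compactness and volume convergence for $\mathrm{Alex}^n(-1)$ \cite{BGP92}.
\begin{enumerate}
\item \emph{Almost volume cone implies almost metric cone.} A pinched scale at $x$ forces $B_r(x)$ to be $\epsilon' r$-close to a ball in a metric cone with vertex at $x$.
\item \emph{Cone splitting.} Suppose $B_r(x)$ contains points $y_0,\dots,y_{k+1}$ that are pinched at scale $r$ and $\tau r$-effectively independent, meaning they are not $\tau r$-close to any $k$-dimensional affine subset. Then $B_r(x)$ is $\epsilon r$-close to a ball in $\mathbb R^{k+1}\times Z$.
\end{enumerate}
Together these give the basic dichotomy. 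For points of $\mathcal S^k_{\epsilon,\beta r}$, the pinched points in $B_r(x)$ lie within $\tau r$ of a $k$-dimensional set $L_{x,r}$.

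The second step turns this dichotomy into a packing bound via an $L^2$ best-approximation estimate. Let $\mu$ be a measure assigning mass $r_i^k$ to $x_i$. The key estimate bounds the Jones number $\beta^k_{2,\mu}(x,r)^2$ (the $L^2(\mu)$ distance to the best $k$-plane) by $r^{-k}\int_{B_r(x)}\bigl(\theta_{\gamma r}(y)-\theta_{8r}(y)\bigr)\,d\mu(y)$. Summed over scales, this bound is controlled by the total volume drop. The discrete Reifenberg theorem of Naber--Valtorta then yields $\mu(B_r(x))\le C r^k$ wherever the bound holds at smaller scales. I would close the argument by an inductive covering: decompose into good balls, where the Reifenberg estimate gives the packing, and bad balls, where the energy drops by a definite amount. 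Since the energy can drop definitely only $N(n,\epsilon)$ times, the induction terminates and gives $\sum r_i^k\le C_p(n,\epsilon)$.

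The main obstacle is the $\beta$-number estimate. In the Ricci-limit setting it uses harmonic or heat-flow almost-splitting functions and an $L^2$ subspace argument. In Alexandrov spaces I would replace this by strainers and distance functions, which give approximate linear coordinates on almost-conical balls. Pinching is then converted into almost-linearity of $d_y^2$ along rays of the approximate cone. I expect this step, together with the choice of $\beta(n,\epsilon)$ compatible with the cone-splitting constants, to require the most care. Everything else is standard covering bookkeeping.
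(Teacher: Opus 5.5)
The paper gives no proof of this statement; it quotes it from Li--Naber. The feature it relies on is that $C_p$ and $\beta$ depend only on $(n,\epsilon)$, with no lower bound on $\haus^n(B_1(q))$. Your sketch does not deliver that, and the gap is structural, not bookkeeping.

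Your energy is the ratio $\theta_r$. On $B_1(q)$ it is bounded below only by $c(n)\haus^n(B_1(q))$, not by a constant depending on $n$ alone. So your phrase ``noncollapsed Alexandrov setting'' silently adds a hypothesis $\haus^n(B_1(q))\ge v>0$. Rigidity step (1), proved by compactness plus volume convergence, holds only along noncollapsing sequences, and step (2) inherits this through its pinching hypothesis. Hence at best you get $C_p(n,\epsilon,v)$ and $\beta(n,\epsilon,v)$.

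No choice of pinching rescues the volume ratio in collapsed spaces. Take $X=Y\times[0,\delta]$ with $Y\in\mathrm{Alex}^{n-1}(0)$; then $\theta_r\lesssim\delta/r$.
\begin{itemize}
\item With absolute pinching $\theta_{\beta r}-\theta_{2r}<\eta$ (the sign in your definition is reversed, which as written makes every scale pinched), every scale $r\gg\delta/(\beta\eta)$ is pinched. Yet $B_r(x)$ is $\delta$-close to $B^Y_r(y)$, which need not be close to any cone with vertex at $x$. So step (1) is false.
\item With relative pinching, already for $Y=\mathbb{R}^{n-1}$ one has $\theta_{\beta r}/\theta_{2r}\gtrsim 2/\beta$ at all scales between roughly $\delta/\beta$ and $1$. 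That gives about $\log(1/\delta)$ bad scales, so there is no bound $N(n,\epsilon)$, even though these balls are conical.
\end{itemize}

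This is exactly where the theorem is used. In the proof of Theorem~\ref{thm:Gromov} it is applied to $X_R=(X,R^{-1}d)$, where $\haus^n(B_1^{X_R}(p))=R^{-n}\haus^n(B_R^X(p))$. This quantity has no lower bound uniform in $R$; it tends to $0$ under the very conclusion being proved. A noncollapsed packing estimate with constants depending on $v$ therefore gives nothing there.

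To prove the statement as written, you need a collapse-insensitive replacement for the $\haus^n$-monotonicity that drives your scheme, which means genuinely Alexandrov-specific input. The $L^2$ $\beta$-number estimate you flag as the main obstacle is left as a plan (strainers replacing splitting maps). Even if completed, it would inherit the same volume dependence as long as it is controlled by drops of $\theta$.
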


We apply this with $k=n-1$.  Although the singular set of an Alexandrov
space without boundary has codimension two, its
$(\epsilon,n-1)$-effective stratum may be nonempty at a fixed scale.
For $A\subset Y$, let $P_s^Y(A)$ denote the supremum of the cardinalities of
pairwise disjoint families of balls of radius $s$ in $Y$ with centers in $A$.

\begin{proof}[Proof of Theorem~\ref{thm:Gromov}]
Fix $p\in X$ and $R\geq1$.  Let
\[
  (X_R,\dist_R,p):=(X,R^{-1}\dist,p).
\]
Thus $B_1^{X_R}(p)=B_R^X(p)$ and the Hausdorff measures scale as
\begin{equation}\label{eq:appendix-measure-scaling}
  \haus_{X_R}^n=R^{-n}\haus_X^n.
\end{equation}
For every $x\in X$, the unit-ball deficit and
\eqref{eq:appendix-measure-scaling} give
\[
  \haus_{X_R}^n(B_{R^{-1}}^{X_R}(x))
  \leq(1-\delta)\omega_nR^{-n}.
\]
The contrapositive of Proposition~\ref{prop:VolRigidity} therefore yields
\begin{equation}\label{eq:appendix-effective-stratum}
  X_R\subset\mathcal S_{\epsilon,R^{-1}}^{n-1}(X_R)
\end{equation}
for some $\epsilon=\epsilon(n,\delta)>0$.

Let $\beta=\beta(n,\epsilon)$ be supplied by
Theorem~\ref{thm:packing}.  We first assume $R\geq\beta^{-1}$ and set
\[
  r=\frac1{\beta R}\leq1.
\]
If $\{B_r^{X_R}(x_i)\}_{i\in I}$ is any pairwise disjoint family with
centers in $B_1^{X_R}(p)$, then
\eqref{eq:appendix-effective-stratum} says that
\[
  x_i\in\mathcal S_{\epsilon,R^{-1}}^{n-1}(X_R)
  =\mathcal S_{\epsilon,\beta r}^{n-1}(X_R).
\]
The Li--Naber theorem gives
\[
  |I|r^{n-1}\leq C_p,
\]
and hence
\begin{equation}\label{eq:appendix-packing-rescaled}
  P_r^{X_R}(B_1^{X_R}(p))
  \leq C_p\beta^{n-1}R^{n-1}.
\end{equation}
Scaling back, this is precisely
\begin{equation}\label{eq:appendix-packing-original}
  P_{1/\beta}^X(B_R^X(p))
  \leq C_p\beta^{n-1}R^{n-1}.
\end{equation}

Choose a maximal $2/\beta$-separated set $\{x_i\}_{i\in I}$ in
$B_R^X(p)$.  The balls $B_{1/\beta}^X(x_i)$ are pairwise disjoint, so
\eqref{eq:appendix-packing-original} bounds $|I|$.  Maximality also implies
that the balls $B_{2/\beta}^X(x_i)$ cover $B_R^X(p)$.  Bishop--Gromov
comparison now gives
\begin{align*}
  \haus^n(B_R^X(p))
  &\leq\sum_{i\in I}\haus^n(B_{2/\beta}^X(x_i))\\
  &\leq\omega_n(2/\beta)^n|I|
  \leq C(n,\delta)R^{n-1}.
\end{align*}

It remains only to treat $1\leq R<\beta^{-1}$.  In this range,
Bishop--Gromov comparison directly gives
\[
  \haus^n(B_R^X(p))\leq\omega_nR^n
  \leq\omega_n\beta^{-1}R^{n-1}.
\]
After enlarging the constant, the two ranges prove the theorem.
\end{proof}
 
\bibliographystyle{alpha}
\bibliography{ref}

\end{document}